\documentclass[11pt,a4paper]{article}

\usepackage{amsfonts, amsmath, amssymb, amsthm, enumerate}
\usepackage{psfrag,graphicx}
\pagestyle{plain}

%%%%  BEGIN MACRO
\delimitershortfall=-0.1pt

\newtheorem{proposition}{Proposition}[section]
\newtheorem{theorem}[proposition]{Theorem}
\newtheorem{remark}[proposition]{Remark}
\newtheorem{lemma}[proposition]{Lemma}

%\theorembodyfont{\rmfamily}{\newtheorem{remark}{Remark}[section]}
%\theorembodyfont{\rmfamily}{\newtheorem{remark}[proposition]{Remark}}

\newenvironment{proofof}[1]{\smallskip\noindent\emph{Proof of #1.}%
\hspace{1pt}}{\hspace{-5pt}{\nobreak\quad\nobreak\hfill\nobreak%
$\square$\vspace{8pt}\par}\smallskip\goodbreak}

\newcommand\sqr[2]{{\vcenter{\vbox{\hrule height.#2pt
   \hbox{\vrule width.#2pt height#1pt \kern#1pt
      \vrule width.#2pt}
   \hrule height.#2pt}}}}

\newcommand{\ds}{\displaystyle}

\renewcommand{\epsilon}{\varepsilon}
\newcommand{\eps}{\epsilon}

\newcommand{\BV}{\mathbf{BV}}

\newcommand{\C}[1]{{{\mathbf{C}^\mathbf{#1}}}}

\renewcommand{\L}[1]{{{\mathbf{L}^\mathbf{#1}}}}
\newcommand{\Lloc}[1]{\mathbf{L}^{\mathbf{1}}_{\mathrm{loc}}}

\newcommand{\reali}{{\mathbb{R}}}

\newcommand{\naturali}{\mathbb{N}}

\newcommand{\tv}{\mathrm{TV}}

%%%%%
% definition modification for the
% \@makecaption command: it have been made
% of variable width.
% The length \captionwidth controls its width.
\makeatletter

\newlength{\captionwidth}
\setlength{\captionwidth}{.9\textwidth}

\long\def\@makecaption#1#2{%
   \vskip 10\p@
   \setbox\@tempboxa\hbox{\small #1: #2}%
   \ifdim \wd\@tempboxa > \captionwidth %\hsize
       \hbox to\hsize{\hfil
       \parbox[t]{\captionwidth}{
        \small #1: #2\par}
       \hfil}
     \else
       \hbox to\hsize{\hfil\box\@tempboxa\hfil}%
   \fi}

\makeatother
%%%%%

\def\knp{K_{np}}

\setlength{\textwidth}{160mm}
\setlength{\textheight}{230mm}
\voffset -20mm
\hoffset -15mm
%%%%  END MACRO
%%%%%%%%%%%%%%%%%%%%%%%%%%%%%%%%%%%%%%%%%%%%%%%%%%%%%%%%%%%%%%%%%

\title{Global weak solutions for a model of two-phase flow \\ with a single interface}

\author{}

\author{Debora Amadori\footnote{Department of Engineering and Computer Science and Mathematics, University of L'Aquila,
Italy}
\and Paolo Baiti\footnote{Department of Mathematics and Computer Science, University of Udine, Italy}
\and Andrea Corli\footnote{Department of Mathematics and Computer Science,
University of Ferrara, Italy}
\and Edda Dal Santo\footnotemark[2]
}
%\date{}

\begin{document}

\maketitle

\abstract{We consider a simple nonlinear hyperbolic system modeling the flow of an inviscid fluid. The model includes as state variable the mass density
fraction of the vapor in the fluid and then phase transitions can be taken into consideration; moreover, phase interfaces are contact
discontinuities for the system. We focus on the special case of initial data consisting of two different phases separated by an interface.
We find explicit bounds on the (possibly large) initial data in order that weak entropic solutions exist for all times.
The proof exploits a carefully tailored version of the front tracking scheme.}
%\smallskip

%\begin{AMS}
%\noindent\textit{2000~Mathematics Subject Classification:}
 % 35L65, 35L60, 35L67, 76T30\end{AMS}

%\smallskip

%\begin{keywords}
%\noindent\textit{Key words and phrases:}
%  Hyperbolic systems of conservation laws, phase transitions
%\end{keywords}

%%%%%%%%%%%%%%%%%%%%%%%%%%%%%%%%%%%%%%%%%%%%%%%%%%%%%%%%%%%%%%%%%
\section{Introduction}\label{sec:intro}

We consider the following nonlinear model for the one-dimensional flow of an inviscid fluid, where different phases can coexist:
\begin{equation}\label{eq:system}
\left\{
\begin{array}{ll}
v_t - u_x &= 0\,,\\
u_t + p(v,\lambda)_x &= 0\,,
\\
\lambda_t &= 0\,.
\end{array}
\right.
\end{equation}
Here $t>0$ and $x\in\reali$; moreover, $v>0$ is the specific
volume, $u$ the velocity and $\lambda$ the mass-density fraction of
vapor in the fluid. Then, we have $\lambda\in[0,1]$ and $\lambda=0$
characterizes the liquid phase while $\lambda=1$ the vapor phase. The pressure $p$ is given by
\begin{equation}\label{eq:pressure}
p(v,\lambda)= \frac{a^2(\lambda)}v\,,
\end{equation}
where $a$ is a $\C{1}$ function defined on $[0,1]$ and satisfying $a(\lambda)>0$ for every $\lambda\in[0,1]$.
We denote $U=(v,u,\lambda)\in\Omega\doteq(0,+\infty) \times \reali \times [0,1]$.

System \eqref{eq:system} is the homogeneous case of a more general model that was first introduced in \cite{Fan}. If $\lambda$ is constant,
then \eqref{eq:system} reduces to the isothermal $p$-system, where the global existence of weak solutions holds for initial data with arbitrary
total variation \cite{Nishida68,AmadoriGuerra01}. The global existence of weak solutions to the initial value problem for \eqref{eq:system} was
proved in \cite{amadori-corli-siam} under a suitable condition on the total variation of the initial data and the assumption $a'>0$;
a different proof of an analogous result has been recently provided in \cite{Asakura-Corli}.
The condition on the initial data was also stated in a slightly different way in \cite{amadori-corli-source} and requires, roughly speaking, that the
total variation of both pressure and velocity is suitably bounded by the total variation of $\lambda$; then, it reminds of the famous condition
introduced in \cite{NishidaSmoller} for the system of isentropic gasdynamics. A Glimm scheme to solve \eqref{eq:system} was proposed in
\cite{Peng}; we refer to \cite{amadori-corli-glimm} for a short proof of the Glimm estimates, which improve those given in \cite{Peng}.

We refer to \cite{GST2007} for the extension of Nishida's result to the initial-value problem in Special Relativity,
and to \cite{LiuIB}, \cite{Liu} to the problem of large solutions to nonisentropic gas dynamics.

A model analogous to \eqref{eq:system} is also studied in  \cite{HoldenRisebroSande, HoldenRisebroSande2}, where the pressure is
$v^{-\gamma}$ and the state variable $\lambda$ is replaced by the adiabatic exponent $\gamma>1$; also in this case the global existence of
solutions is proved under a condition that has the same flavor of that discussed above. At last, we refer to \cite{Dafermos} for a comprehensive
discussion of the problem of the global existence of solutions for systems of conservation laws.

In this paper we focus on a %very
particular class of initial data for \eqref{eq:system}: the state variable $\lambda$ is constant both for $x<0$ and
for $x>0$. More precisely, for $x\in\reali$ we consider initial data
\begin{equation}
U(x,0)= U_o(x) = \left(v_o(x), u_o(x), \lambda_o(x) \right),\label{init-data}
\end{equation}
where
\begin{equation}\label{eq:lambda_Riemann}
\lambda_o(x) = \left\{
\begin{array}{ll}
\lambda_\ell & \hbox{ if }x<0\,,\\
\lambda_r & \hbox{ if }x>0\,,\\
\end{array}
\right.
\end{equation}
for two constant values $\lambda_\ell\ne\lambda_r\in[0,1]$. Phase interfaces are stationary in model \eqref{eq:system}; then, the assumption
\eqref{eq:lambda_Riemann} reduces the study of the initial value problem for \eqref{eq:system} to that of two initial value problems for two
isothermal $p$-systems, which are coupled through the interface at $x=0$. In other words, the flow remains in the two phases characterized by
the values $\lambda_\ell$ and $\lambda_r$ as long as a solution exists.
The case of initial data giving rise to {\em two} phase interfaces is addressed in a forthcoming paper \cite{ABCD2}.

The problem we are dealing with can be understood in a different way as follows. Phase interfaces are contact discontinuities for
system \eqref{eq:system}; then, in a sense, we fall into the general framework of the perturbation of a Riemann solution.
For this subject we refer to \cite{BressanColombo-Unique, CorliSableTougeronS, CorliSableTougeronC, Schochet-Glimm, Chern},
where however the perturbation is small in the $\BV$ norm. In our case the perturbation leaves unchanged the initial datum for $\lambda$
but it is not necessarily small in the other state variables. % \footnote{I due problemi sono diversi, infatti in genere la perturbazione tende a 0 all'infinito, per come l'hai scritto dunque dovremmo guardare i casi in cui i valori $u(\pm\infty)$ coincidono. Le nostre soluzioni si potrebbero vedere come perturbazioni di PdR completi (come e' scritto nel paragrafo), in cui i dati sono i valori a $\pm\infty di (v,u,\lambda)(0)$. Nel nostro caso, il dato iniziale per una discontinuita' di contatto e' $(\bar v,\bar u,\bar \lambda)$ che vale $(v_\ell,\bar u,\lambda_\ell)$ se $x<0$ e $(v_r,\bar u,\lambda_r)$ se $x>0$, con $v_\ell$ e $v_r$ opportuni; una sua pertubazione, nel senso di \cite{BressanColombo-Unique, CorliSableTougeronS, CorliSableTougeronC, Schochet-Glimm} ha dati iniziali $(v_o(x),u_o(x),\lambda_o(x))=(\bar v,\bar u,\bar\lambda)+ (\tilde v(x),\tilde u(x),\tilde\lambda(x))$. Essa e' del tipo dei nostri dati iniziali solo se $\tilde\lambda=0$. Viceversa, un dato iniziale come il nostro e' una perturbazione di una discontinuita' di contatto se $\tilde\lambda=0$ e $v_\ell$, $v_r$ sono opportuni.}
The problem of a {\em small} perturbation of a Riemann solution and the related existence of globally defined solutions was thoroughly studied
in \cite{LewickaBV}; in \cite{AmCo12-Chambery}  the conditions given in \cite{LewickaBV} are made explicit for system \eqref{eq:system}.
%However, we emphasize that the general results of \cite{LewickaBV} apply to {\em small} perturbations of a Riemann solution while here, thanks to the special form of system \eqref{eq:system}, the initial data are not required to be necessarily small.

The main result of this paper is stated in Theorem \ref{thm:main} and concerns the global existence of weak solutions to the initial value problem \eqref{eq:system}, \eqref{init-data}, \eqref{eq:lambda_Riemann}, provided that \eqref{eq:pressure} holds and the initial data satisfy suitable conditions. The focus is precisely on weakening as much as possible such conditions, allowing for large initial data: the result in \cite{amadori-corli-siam} mentioned above clearly applies to the present situation, but it is here greatly improved.

The proof of Theorem \ref{thm:main} follows the same steps as Theorem $2.2$ in \cite{amadori-corli-siam}. However, several novelties have been introduced here:
\begin{itemize}
\item[-] a Glimm functional that better accounts for nonlinear interactions with the phase wave;
\item[-] refined interaction estimates on the amplitude of the reflected waves (Lemma~\ref{lem:shock-riflesso});
\item[-] an original treatment of \emph{non-physical} waves in the front tracking algorithm;
\item[-] a simpler proof of the decay of the reflected waves at a geometric rate, as the number of reflections increases
(see Proposition~\ref{prop:alpha} and Remark~\ref{rem:65}).
\end{itemize}

\noindent In particular, as a consequence of this new approach, we require no conditions on the maximal amplitude of the
phase wave, differently from \cite[(2.8)]{amadori-corli-siam} and the equivalent formulation in \cite[(3.6)]{amadori-corli-source}.

In spite of the fact that initial data \eqref{eq:lambda_Riemann} seem to reduce system \eqref{eq:system} to two systems of two conservation
laws, we cannot avoid the introduction of non-physical waves \cite{Bressanbook} in the scheme, as a formal example
in \cite{AmCo08-Proceed-Maryland} shows. Nevertheless, we can let all these non-physical waves propagate along the same vertical front
carrying the contact discontinuity, in order to give an immediate bound on the number of fronts: this represents a remarkable algorithmic
advantage and the main feature of the front tracking used here. On the other hand, we recall that if $\lambda$ is constant then non-physical
waves need not to be introduced, see \cite{AmadoriGuerra01, BaitiDalSanto}.

The plan of the paper is the following. %At last, here follows the plan of the paper.
In Section \ref{main} we state our main result, % and make a comparison with that proved in \cite{amadori-corli-siam}.
while in Section \ref{prelim} we first provide some information on the Riemann problem and then
show how to treat non-physical waves by introducing a composite wave together with the phase wave.
Consequently, we introduce two solvers to be used in the front-tracking scheme, which shows up in Section \ref{sec:app_sol}.
Section \ref{sec:interactions} deals with interactions while in the last Section \ref{sec:Cauchy} we prove the convergence and consistency of the algorithm and make a comparison with the result in \cite{amadori-corli-siam}. In a final short appendix we show how the damping coefficient $c$ introduced in \eqref{eq:chi_def}, which plays a key role in the paper, is also fundamental in the stability analysis of Riemann problems in the sense of \cite{Schochet-Glimm}.

%%%%%%%%%%%%%%%%%%%%%%%%%%%%%%%%%%%%%%%%%%%%%%%%%%%%%%%%%%%%%%%%%

\section{Main Result}\label{main}
\setcounter{equation}{0}

In this section we state our existence theorem. First, we define $a_r=a(\lambda_r)$,  $a_\ell=a(\lambda_\ell)$ and
\begin{equation}\label{def:A_0}
  \delta_2 = 2\, \frac{a_r-a_\ell}{a_r+a_\ell}\,.
 \end{equation}
Notice that $\delta_2$ ranges over $(-2,2)$ as soon as $a_r$, $a_\ell$ range over $\reali_+$.
The quantity $\delta_2$ measures the strength of the contact discontinuity located at $x=0$ as in
\cite{AmCo06-Proceed-Lyon, Peng} and it does not change by interactions with waves of the other families, see Lemma \ref{lem:interazioni}.
%Another important quantity
%introduced in \cite{amadori-corli-siam} is the damping coefficient $d(m)$, where $m>0$ denotes the maximum strength of the interacting waves;
%see Lemma \ref{lem:shock-riflesso}. The function $d(m)$ is positive and strictly
%increasing, valued in $[0,1)$ and satisfies $d(0)=0$, see Figure~\ref{fig:dm}.
%Roughly speaking, the function $d(m)$ represents a damping coefficient related to the shocks reflected after an interaction of two waves of the same family.

%At last
We denote $p_o(x) \doteq p\left(v_o(x), \lambda_o(x) \right)$.

\begin{theorem}\label{thm:main}
Assume \eqref{eq:pressure} and consider initial data \eqref{init-data}, \eqref{eq:lambda_Riemann} with $v_o(x)\ge \underline{v}>0$,
for some constant $\underline{v}$. Let $\delta_2$ be as in \eqref{def:A_0}.

There exists a strictly decreasing function $K$ defined for $r\in(0,2)$ and satisfying
\begin{equation}\label{rage}
\lim_{r\to 0+} K(r) = +\infty\,,\qquad  \lim_{r\to 2-} K(r) = \frac 29\log\left(2+\sqrt 3\right),
\end{equation}
such that,
if $\delta_2\not = 0$ and the initial data satisfy
\begin{equation}\label{hyp2}
  \tv\left(\log(p_o)\right) \,+\, \frac{1}{\min\{a_r,a_\ell\}}  \tv(u_o) < K(|\delta_2|)\,,
\end{equation}
then the Cauchy problem \eqref{eq:system}, \eqref{init-data} has a
weak entropic solution $(v,u,\lambda)$ defined for
$t\in\left[0,+\infty\right)$. If $\delta_2=0$ the same conclusion holds with $K(|\delta_2|)$ replaced by $+\infty$ in \eqref{hyp2}.

Moreover, the solution is valued in a compact set of $\Omega$ and there is a constant $C=C(\delta_2)$ such that
for every $t\in\left[0,+\infty\right)$ we have
\begin{equation}\label{eq:tv-est}
\tv \left(v(t,\cdot), u(t,\cdot)\right)  \le  C\,.
\end{equation}
\end{theorem}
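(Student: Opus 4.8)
The plan is to construct approximate solutions by a front-tracking scheme, to derive uniform-in-time bounds on their total variation through a suitably weighted Glimm functional, and then to pass to the limit. Since $\lambda$ is stationary and piecewise constant, the only genuine coupling in \eqref{eq:system} occurs along the vertical line $x=0$, where the phase interface of strength $\delta_2$ sits for all time; away from it the evolution decouples into two isothermal $p$-systems, each of which is handled by the classical Nishida estimates. The whole difficulty is thus concentrated in the repeated interaction of the acoustic fronts with this single interface.

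First I would fix the Riemann solver. Working in the Riemann coordinates naturally attached to $\log p$ and $u$---so that the left-hand side of \eqref{hyp2} reads as a total wave strength---I would solve the two $p$-system Riemann problems on either side of $x=0$ and couple them by continuity of $p$ and $u$ across the stationary contact. The crucial output is the interface interaction law: an incident acoustic wave of amplitude $\alpha$ splits into a transmitted and a reflected wave, the latter governed by a reflection coefficient depending only on $\delta_2$; this is the content of Lemma~\ref{lem:shock-riflesso}, while Lemma~\ref{lem:interazioni} records that $\delta_2$ is itself left unchanged by such crossings. I would then set up the two solvers for the scheme and, following the device highlighted in the introduction, route all non-physical waves onto the single vertical front carrying the contact, which caps the number of fronts a priori.

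The core of the argument is the Glimm functional. I would introduce a weighted total variation $V(t)$ of the acoustic strengths, the weights jumping across $x=0$ and tuned through the damping coefficient $c$ of \eqref{eq:chi_def}, together with an interaction potential $Q(t)$, and set $F=V+C\,Q$. The weights have to be chosen so that at every interface crossing the gain in the reflected wave is strictly dominated by the corresponding weight drop, making $F$ nonincreasing across interactions (and constant in between). This is precisely where the threshold $K(|\delta_2|)$ enters: under \eqref{hyp2} the compounded reflection is strictly contractive, so that $F(0)<K(|\delta_2|)$ forces $F(t)$ to remain bounded and the reflected waves to decay at a geometric rate (Proposition~\ref{prop:alpha}). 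Monotonicity of $F$ yields the uniform estimate \eqref{eq:tv-est}, and since within each phase $\log v$ differs from $-\log p$ only by a constant, the bound on $\tv(\log p)$ together with $v_o\ge\underline{v}$ confines the state to a compact subset of $\Omega$, keeping $v$ away from $0$ and $\infty$.

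The main obstacle is the sharp balancing of this interface interaction. Because the acoustic variation is allowed to be large, the geometric series of successive reflected contributions converges only under the precise algebraic constraint encoded in $K$, whose limiting behaviour \eqref{rage} mirrors the worst-case compounding as $|\delta_2|\to 2$. Verifying that $F$ decreases across every interaction type---acoustic--acoustic, acoustic--interface, and those involving the non-physical front---and that the error produced by non-physical waves vanishes in the limit, is the delicate step. Once the uniform bound and the geometric decay are secured, the number of fronts stays finite on each compact time interval, Helly's theorem extracts a convergent subsequence, and a standard consistency argument, combined with the Lax admissibility of each physical front, shows the limit to be a weak entropic solution of \eqref{eq:system}, \eqref{init-data} for all $t\ge0$.
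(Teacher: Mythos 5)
Your overall architecture (front tracking, a weighted Glimm functional $F=L+KQ$, generation orders with geometric decay, Helly plus a consistency argument) is the same as the paper's. However, you have misplaced the key estimate, and this is not a cosmetic slip: it is the point on which the whole large-data argument turns. In the paper the damping coefficient $c(z)=(\cosh z-1)/(\cosh z+1)$ of \eqref{eq:chi_def} and Lemma~\ref{lem:shock-riflesso} have nothing to do with the interface: they bound the \emph{reflected wave produced when two acoustic waves of the same family interact inside one phase} (the $SR,RS$ cases). The reflection at the phase interface is a different estimate, namely $|\eps_j|\le \tfrac12\delta_2|\delta_i|$ in Lemma~\ref{lem:interazioni}. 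Your proposal assumes that ``away from the interface the evolution is handled by the classical Nishida estimates,'' but Nishida's argument only gives that the unweighted length $\bar L$ does not increase; it does not control the reflected shocks generated by same-family interactions, which travel back toward the interface and feed the reflection cascade. Without the sharp bound \eqref{eq:chi_def} one cannot justify assigning a weight $\xi>1$ to shocks with $\xi\le 1/c(m)$, and without that weight the functional $F$ does not decrease across same-family interactions for large data. Relatedly, in the paper the weight does not ``jump across $x=0$'': $\xi$ distinguishes shocks from rarefactions everywhere, while the location relative to $x=0$ only enters through the set of approaching waves in $V$.

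This misplacement also distorts your account of where the threshold $K(|\delta_2|)$ comes from. It is not that \eqref{hyp2} makes ``the compounded reflection strictly contractive'': contractivity of the generation-order cascade ($\mu<1$ in \eqref{eq:mu}) follows from the parameter choices $1+\delta_2<\xi\le 1/c(m)$ and \eqref{cond_K_xi-strict}, which impose an \emph{upper} bound on the maximal wave size $m$ via $c(m)<1/(1+|\delta_2|)$. The hypothesis \eqref{hyp2} is needed for the opposite, \emph{lower} bound: $\bar L(0+)\le m\,c^2(m)$ guarantees by a bootstrap that all waves in the approximate solution stay below size $m$, which is exactly the a priori assumption \eqref{rogna} under which the interaction estimates (and the constant $C_o(m)$) are valid. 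The function $K(r)=z(w^{-1}(r))$ in \eqref{eq:K-explicit} encodes precisely the compatibility of these two competing requirements on $m$, which is why it degenerates to the finite value $\tfrac29\log(2+\sqrt3)$ as $|\delta_2|\to 2$ rather than to $0$. As written, your plan would not produce this threshold, and the estimates would not close for large acoustic variation.
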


We refer to \eqref{eq:K-explicit} for the definition of the function $K$; therefore, condition \eqref{hyp2} is {\em explicit}.
We recall that related results of global existence of solutions with large data \cite{NishidaSmoller, LiuIB, Liu, HoldenRisebroSande, HoldenRisebroSande2} do not precise the threshold of smallness of the initial data.

Moreover, we observe that condition \eqref{hyp2} is trivially satisfied if
\begin{equation}\label{eq:smalldata}
\tv\left(\log(p_o)\right) \,+\, \frac{1}{\min\{a_r,a_\ell\}}  \tv(u_o)\le \frac 29\log\left(2+\sqrt 3\right),
\end{equation}
because of \eqref{rage}. Then, problem \eqref{eq:system}, \eqref{init-data} has a global solution if \eqref{eq:smalldata} is satisfied and $v_o(x)\ge \underline{v}>0$ holds. This is a striking difference with respect to the results in \cite{amadori-corli-siam, amadori-corli-source}, where the corresponding bound in the right-hand side vanishes at a critical threshold.
Moreover, Theorem \ref{thm:main} improves the main result in \cite{amadori-corli-siam}, when restricted to the case of a single contact discontinuity; we refer to Subsection~\ref{subsec:proof} for a comparison.
At last, we point out that if $\delta_2=0$ we recover the result of \cite{Nishida68}.
% \footnote{Qui, se vogliamo, potremmo mettere l'esempio di PE che il risultato vale per la somma e non per il massimo delle due TV.}

% The present theorem improves the main result in \cite{amadori-corli-siam},
% when referred to the case of the single contact discontinuity.
% See the end of Subsection~\ref{subsec:proof}.

It is left open the question of whether the global existence of solutions to \eqref{eq:system}, \eqref{init-data} for any $\BV$ initial data $v_o$, $u_o$ occurs,
opposite to the possibility of the blow-up in finite time for certain $\BV$ data.

%%%%%%%%%%%%%%%%%%%%%%%%%%%%%%%%%%%%%%%%%%%%%%%%%%%%%%%%%%%%%%%%%

\section{The Riemann problem and the composite wave}\label{prelim}
\setcounter{equation}{0}

In this section we first briefly recall some basic facts about system \eqref{eq:system}, its wave curves and the solution to the Riemann problem; we refer to \cite{AmCo06-Proceed-Lyon,amadori-corli-siam} for more details. Next, we introduce a composite wave which sums up the effects of the contact discontinuity and the non-physical waves. We then show two Riemann solvers that make use of the composite wave.

Under assumption (\ref{eq:pressure}) system (\ref{eq:system}) is strictly hyperbolic in $\Omega$ with eigenvalues $e_1 = -\sqrt{-p_v(v,\lambda)}$, $e_2 = 0$, $e_3 = \sqrt{-p_v(v,\lambda)}$; the eigenvalues $e_1$ and $e_3$ are genuinely nonlinear while $e_2$ is linearly degenerate.

For $i=1,3$, the right shock-rarefaction curves through the point $U_o = (v_o,u_o,\lambda_o)$ for (\ref{eq:system}) are
\begin{equation}
v \mapsto \left(v,u_o + 2a(\lambda_o) h(\eps_i),\lambda_o\right)\,,\qquad v>0\,, \ i=1,3\,,
\label{eq:lax13}
\end{equation}
where the strength $\eps_i$ of an $i$-wave is defined as
\begin{equation}\label{eq:strengths}
\eps_1=\frac{1}{2}\log\left(\frac{v}{v_o}\right),
\quad \eps_3=\frac{1}{2}\log\left(\frac{v_o}{v}\right)
\end{equation}
and the function $h$ is defined by
\begin{equation}\label{h}
h(\eps)= \begin{cases}
\eps& \mbox{ if } \eps \ge 0\,,\\
\sinh \eps& \mbox{ if } \eps < 0\,.
\end{cases}
\end{equation}

Then, rarefaction waves have positive strengths and shock waves have negative strengths.
The wave curve through $U_o$ for $i=2$ is defined by
\begin{equation*}
\lambda\mapsto\left(v_o\ds\frac{a^2(\lambda)}{a^2(\lambda_o)},
u_o,\lambda\right)\,,\qquad \lambda\in[0,1]\,.
\end{equation*}
Then, the pressure is constant along a $2$-curve; the strength of a $2$-wave is defined by
\begin{equation*}
\eps_2 = 2\, \frac{a(\lambda)-a(\lambda_o)}{a(\lambda)+a(\lambda_o)}\,.
\end{equation*}
Now, we consider the Riemann problem for (\ref{eq:system}) with initial condition
\begin{equation}\label{eq:incond}
(v,u,\lambda)(0,x)=\left\{
\begin{array}{ll}
(v_\ell,u_\ell,\lambda_\ell)=U_\ell & \hbox{ if }x<0\,,
\\
(v_r,u_r,\lambda_r)=U_r & \hbox{ if }x>0\,,
\end{array}
\right.
\end{equation}
for $U_\ell$ and $U_r$ in $\Omega$. We write %$a_r=a(\lambda_r)$,
$p_r= a^2_r/v_r$, % and similarly for % $a_\ell$,
$p_\ell= a^2_\ell/v_\ell$. %; at last, we define

\begin{proposition}[\cite{AmCo06-Proceed-Lyon}]\label{prop:RP}
The Riemann problem \eqref{eq:system}, \eqref{eq:incond} has a
unique $\Omega$-valued solution in the class of solutions
consisting of simple Lax waves, for any pair of states $U_\ell$, $U_r$ in $\Omega$.

Moreover, if $\eps_i$ is the strength of the $i$-wave, $i=1,2,3$, then
\begin{equation}\label{eq:stimaRiemann}
\eps_3-\eps_1  = \frac{1}{2}\log\left(\frac{p_r}{p_\ell}\right)\,,
\qquad 2\left(a_\ell h(\eps_1) +a_r h(\eps_3)\right)   =
u_r-u_\ell\,,
\end{equation}
\[
% \eps_2 = 2\, \frac{a(\lambda_{r})-a(\lambda_{\ell})}{a(\lambda_{r})+a(\lambda_\ell)}.
\eps_2 = 2\, \frac{a_{r}-a_{\ell}}{a_{r}+a_\ell}\,.
\]
\end{proposition}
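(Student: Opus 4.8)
The plan is to build the solution by composing, in order of increasing speed, a $1$-wave, the stationary $2$-wave, and a $3$-wave, exploiting the fact that only the $2$-wave carries a jump in $\lambda$. Since the first and third wave curves in \eqref{eq:lax13} leave $\lambda$ unchanged, the value of $\lambda$ can pass from $\lambda_\ell$ to $\lambda_r$ only across the contact discontinuity; this immediately forces the $2$-wave to connect precisely these two values, so its strength is determined a priori to be $\eps_2 = 2(a_r-a_\ell)/(a_r+a_\ell)$, the third identity in the statement. The problem therefore decouples, and it remains only to find the two genuinely nonlinear strengths $\eps_1,\eps_3$.

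First I would parametrize the intermediate states. Starting from $U_\ell$ and moving along the $1$-curve I reach $U_1=(v_\ell e^{2\eps_1},\, u_\ell+2a_\ell h(\eps_1),\,\lambda_\ell)$; crossing the $2$-wave, which preserves $u$ and the pressure $p=a^2/v$, I reach $U_2=(v_\ell e^{2\eps_1} a_r^2/a_\ell^2,\, u_\ell+2a_\ell h(\eps_1),\,\lambda_r)$; finally the $3$-curve through $U_2$ must hit $U_r$. Matching the $u$-component gives at once
\[
u_r-u_\ell = 2\bigl(a_\ell h(\eps_1)+a_r h(\eps_3)\bigr),
\]
the second identity. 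Matching the $v$-component, using $\eps_3=\tfrac12\log(v_2/v_r)$ together with $p_\ell=a_\ell^2/v_\ell$ and $p_r=a_r^2/v_r$, a short computation yields $p_r=p_\ell\, e^{2(\eps_3-\eps_1)}$, which is the first identity $\eps_3-\eps_1=\tfrac12\log(p_r/p_\ell)$.

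It then remains to solve this $2\times2$ system for $(\eps_1,\eps_3)$. The pressure relation fixes the difference $\eps_3-\eps_1=:\alpha$, so substituting $\eps_3=\eps_1+\alpha$ into the velocity relation reduces everything to the single scalar equation $F(\eps_1)=\tfrac12(u_r-u_\ell)$, where $F(s)=a_\ell h(s)+a_r h(s+\alpha)$. The key point, and really the only substantive step, is that $F$ is a continuous strictly increasing bijection of $\reali$ onto itself: indeed $h$ defined in \eqref{h} is $\C{1}$ and strictly increasing, with $h(s)\to\pm\infty$ as $s\to\pm\infty$ (linear growth for $s\ge0$, $\sinh$ behaviour for $s<0$), and $a_\ell,a_r>0$, so $F$ inherits these properties. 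Hence $F$ is invertible and $\eps_1$, and then $\eps_3$, are uniquely determined. Finally I would check that the solution is $\Omega$-valued: the specific volumes $v_\ell e^{2\eps_1}$, $v_2$, $v_r$ are automatically positive and $\lambda\in\{\lambda_\ell,\lambda_r\}\subset[0,1]$; and that the three waves are correctly ordered, the $1$-wave having negative speed, the $2$-wave zero speed, and the $3$-wave positive speed (the characteristic speeds being $e_1=-a/v<0$ and $e_3=a/v>0$), so their juxtaposition is a genuine Riemann solution. The monotonicity of $F$ is where all the work sits; the rest is bookkeeping along the wave curves.
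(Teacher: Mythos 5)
Your argument is correct and is essentially the standard proof of this result (the paper itself gives no proof, quoting the proposition from the cited reference): one composes the $1$-, $2$- and $3$-wave curves, observes that the $\lambda$-jump pins down $\eps_2$ and that the pressure relation fixes $\eps_3-\eps_1$, and then reduces the velocity matching to the strictly monotone scalar equation $a_\ell h(s)+a_r h(s+\alpha)=\tfrac12(u_r-u_\ell)$, which has a unique solution since $h$ is an increasing $\C{1}$ bijection of $\reali$ onto itself. All the intermediate computations (the intermediate states, the identity $p_r=p_\ell e^{2(\eps_3-\eps_1)}$, and the speed ordering of the three waves) check out.
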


The proof of Theorem \ref{thm:main} relies on a wave-front tracking algorithm that introduces non-physical waves \cite{Bressanbook}, which, however, are only needed to solve some Riemann problems involving interactions with the $2$-wave, see Section \ref{sec:app_sol}. Following \cite{amadori-corli-siam}, two states $U_\ell$ and $U_r$ as in \eqref{eq:incond} can be connected by a non-physical wave if $v_\ell=v_r$ and $\lambda_\ell=\lambda_r$; the strength of a non-physical wave is defined as
\begin{equation}\label{eq:strengthnp}
\delta_0=u_r-u_\ell\,.
\end{equation}
Then, a non-physical wave changes neither the side values of $v$ nor those of $\lambda$, while a $2$-wave does not change the side values of $u$. This suggests to define a new wave by composing the $2$-wave with a non-physical wave, with the condition that we assign {\em zero speed} to non-physical waves and we locate them at $x=0$. The order of composition does not matter, because a $2$-wave and a non-physical wave act on different state variables. This procedure differs from the one used in \cite{amadori-corli-siam}. Then, we define the {\em composite $(2,0)$-wave curve} through a point $U_o=(v_o,u_o,\lambda_\ell)$ by
\begin{equation}\label{eq:composite-wave}
u\mapsto \left(  (a_r^2/a_\ell^2) v_o, u,\lambda_r \right)
\end{equation}
and its strength by
\begin{equation*}
\delta_{2,0} = u - u_o\,.
\end{equation*}
The above definition of strength is motivated by the fact that the quantity $\delta_2$ remains constant at any interaction with $1$- or $3$-waves \cite{amadori-corli-siam}. Clearly, a $(2,0)$-wave reduces to the $2$-wave as long as non-physical waves are missing. At last, we notice that the pressure does not change across a $(2,0)$-wave.

In this way, we are left to deal with waves of family $1$, $3$ and a single composite $(2,0)$-wave, which is no more entropic. A Riemann solver analogous to that provided in Proposition \ref{prop:RP} is needed; however, since we have a single contact discontinuity $\delta_2$ and we are going to use the Riemann solver only to solve interactions, we state the following result into such a form.

\begin{proposition}[Pseudo Accurate Solver]\label{prop:PS-AS}
Consider the interaction at time $t$ of a $\delta_{2,0}$-wave with an $i$-wave of strength $\delta_i$, $i=1,3$. Then the Riemann problem at time $t$ has a unique $\Omega$-valued solution, which is formed by waves $\eps_1$, $\delta_{2,0}$, $\eps_3$, where $\eps_1$, $\eps_3$ belong to the first and the third family, respectively. Moreover, we have
\begin{equation}\label{eq:stimaRiemannAS}
\eps_3-\eps_1  = \frac{1}{2}\log\left(\frac{p_r}{p_\ell}\right),
\qquad 2\left(a_\ell h(\eps_1) +a_r h(\eps_3)\right)   =
u_r-u_\ell - \delta_{2,0}\,.
\end{equation}
\end{proposition}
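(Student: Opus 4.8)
The plan is to reduce the interaction of a $\delta_{2,0}$-wave with an incoming $i$-wave to the standard Riemann-solver structure of Proposition~\ref{prop:RP}, the only difference being that the composite $(2,0)$-wave carries the extra velocity jump $\delta_{2,0}$ in place of the purely pressure-preserving $2$-wave. Concretely, let $U_\ell$ and $U_r$ be the extreme states produced by the incoming configuration (the $\delta_{2,0}$-wave abutting the $i$-wave), and let $p_\ell,p_r$ and $u_\ell,u_r$ be the corresponding pressures and velocities on the two sides. The goal is to solve the new Riemann problem $(U_\ell,U_r)$ by a wave pattern consisting of a $1$-wave $\eps_1$, the composite $(2,0)$-wave, and a $3$-wave $\eps_3$, and to show that this solution exists and is unique in $\Omega$.

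\medskip

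\noindent\textbf{Key steps.} First I would set up the correct middle states. Starting from $U_\ell$, apply the forward $1$-shock-rarefaction curve~\eqref{eq:lax13} to reach an intermediate state $U_m^-$; then apply the composite $(2,0)$-wave curve~\eqref{eq:composite-wave}, which multiplies the specific volume by $a_r^2/a_\ell^2$ and adds $\delta_{2,0}$ to the velocity while switching $\lambda_\ell\mapsto\lambda_r$, reaching $U_m^+$; finally apply the $3$-wave curve to reach $U_r$. The unknowns are $\eps_1$ and $\eps_3$. Since the pressure is invariant across the $(2,0)$-wave (as noted just after~\eqref{eq:composite-wave}), tracking $\log p$ across the three waves and using~\eqref{eq:strengths} together with the definition $\eps_i=\pm\tfrac12\log(v/v_o)$ gives the first relation $\eps_3-\eps_1=\tfrac12\log(p_r/p_\ell)$ exactly as in~\eqref{eq:stimaRiemann}. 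For the velocity I would accumulate the jumps: the $1$- and $3$-waves contribute $2a_\ell h(\eps_1)$ and $2a_r h(\eps_3)$ through~\eqref{eq:lax13}, while the composite wave now contributes $\delta_{2,0}$ rather than zero; equating the total to $u_r-u_\ell$ yields the second relation in~\eqref{eq:stimaRiemannAS} after moving $\delta_{2,0}$ to the right-hand side.

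\medskip

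\noindent\textbf{Existence and uniqueness.} With the two scalar relations in hand, the solvability reduces to the same monotonicity argument underlying Proposition~\ref{prop:RP}. From $\eps_3-\eps_1=\tfrac12\log(p_r/p_\ell)$ I would eliminate one variable, say set $\eps_3=\eps_1+c$ with $c$ fixed by the known pressures, and substitute into the velocity equation to obtain a single equation in $\eps_1$. The map $\eps_1\mapsto a_\ell h(\eps_1)+a_r h(\eps_1+c)$ is strictly increasing and onto $\reali$, because $h$ is $\C{1}$, strictly increasing, and $h'>0$ everywhere (from~\eqref{h}, $h'=1$ on $\eps\ge0$ and $h'=\cosh\eps>0$ on $\eps<0$), so its range is all of $\reali$. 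Hence there is a unique $\eps_1$, and correspondingly a unique $\eps_3$, solving the system for any value of $u_r-u_\ell-\delta_{2,0}$; the resulting state stays in $\Omega$ since $v>0$ is preserved along each wave curve. This gives the claimed unique $\Omega$-valued solution of the announced form.

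\medskip

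\noindent\textbf{Main obstacle.} The routine part is the monotonicity/bijectivity of the velocity map, which is immediate from the properties of $h$. The only genuine point to verify carefully is the bookkeeping that the pressure truly does not jump across the composite $(2,0)$-wave and that the velocity jump it carries is exactly $\delta_{2,0}$; once this is confirmed, the composite wave behaves formally like the ordinary $2$-wave in Proposition~\ref{prop:RP} but with an added velocity offset, and the two estimates in~\eqref{eq:stimaRiemannAS} follow by the same computation with $u_r-u_\ell$ replaced by $u_r-u_\ell-\delta_{2,0}$.
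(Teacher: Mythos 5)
Your proof is correct, but it takes a genuinely different route from the paper's. The paper does not re-derive the solvability from the wave curves: it observes that the composite $(2,0)$-wave differs from an honest $2$-wave only by the velocity offset $\delta_{2,0}$, shifts the left state to $V_\ell'=U_\ell+(0,\delta_{2,0},0)$ (or, equivalently, shifts $U_m,U_r$ by $-(0,\delta_{2,0},0)$), solves the resulting genuine $2$-wave interaction by invoking Proposition~\ref{prop:RP} as a black box, and then shifts the intermediate states back; the relations \eqref{eq:stimaRiemannAS} are then \eqref{eq:stimaRiemann} with $u_r-u_\ell$ replaced by $u_r-u_\ell-\delta_{2,0}$ by construction. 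You instead compose the three wave curves directly, track $\log p$ and $u$ across them, and establish existence and uniqueness by eliminating $\eps_3=\eps_1+\tfrac12\log(p_r/p_\ell)$ and using that $\eps_1\mapsto a_\ell h(\eps_1)+a_r h(\eps_1+c)$ is strictly increasing and onto $\reali$; this is essentially a self-contained reproof of Proposition~\ref{prop:RP} with the velocity offset inserted, and your bookkeeping (pressure invariant across the $(2,0)$-wave, velocity jump exactly $\delta_{2,0}$) is right. What the paper's reduction buys is brevity and, more importantly, the structural identification of the composite-wave interaction with a genuine $2$-wave interaction, which is what later allows the interaction estimates of Lemma~\ref{lem:interazioni} to be imported unchanged from the $2$-wave case; what your argument buys is independence from the unproved-here Proposition~\ref{prop:RP} and an explicit reason (global monotonicity and surjectivity of $h$) why the solution exists for data of arbitrary size.
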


%%%%%%%%%%%%%%%%%%%%%%%% Figure interactions 20ASL

\begin{figure}[htbp]
\begin{picture}(100,100)(-80,-15)
\setlength{\unitlength}{1pt}

%figure (a)

\put(70,0){

\put(0,0){\line(0,1){40}}
\put(-2,-5){\makebox(0,0){$\delta_{2,0}$}}
\put(30,10){\line(-1,1){30}}
\put(40,5){\makebox(0,0){$\delta_1$}}
\put(0,40){\line(0,1){30}}
\put(2,75){\makebox(0,0){$\delta_{2,0}$}}
\put(0,40){\line(-3,2){30}}
\put(-32,67){\makebox(0,0){$\eps_1$}}
\put(0,40){\line(1,1){30}}
\put(32,77){\makebox(0,0){$\eps_3$}}
\put(-20,40){\makebox(0,0){$U_\ell$}}
\put(20,40){\makebox(0,0){$U_r$}}
\put(20,0){\makebox(0,0){$U_m$}}
\put(-10,60){\makebox(0,0){$U_p'$}}
\put(9,60){\makebox(0,0){$U_q'$}}

%figure name
\put(0,-20){\makebox(0,0){$(a)$}} }
%end figure (a)

%figure (b)
\put(230,0){

\put(0,0){\line(0,1){40}}
\put(-2,-5){\makebox(0,0){$\delta_{2}$}}
\put(30,10){\line(-1,1){30}}
\put(40,5){\makebox(0,0){$\delta_1$}}
\put(0,40){\line(0,1){30}}
\put(2,75){\makebox(0,0){$\delta_{2}$}}
\put(0,40){\line(-3,2){30}}
\put(-32,67){\makebox(0,0){$\eps_1$}}
\put(0,40){\line(1,1){30}}
\put(32,77){\makebox(0,0){$\eps_3$}}
\put(-20,40){\makebox(0,0){$V_\ell'$}}
\put(20,40){\makebox(0,0){$U_r$}}
\put(20,0){\makebox(0,0){$U_m$}}
\put(-10,60){\makebox(0,0){$V_p'$}}
\put(9,60){\makebox(0,0){$V_q'$}}

%figure name
\put(0,-20){\makebox(0,0){$(b)$}} }
%end figure (b)

\end{picture}

\caption{\label{fig:inter20ASL}{$(a)$: interaction with the $(2,0)$-wave solved with the Pseudo Accurate solver, case $i=1$; $(b)$: the auxiliary problem.}}
\end{figure}

%%%%%%%%%%%%%%%%%%%%%%%%%%%%% End Figure interactions 20ASL

\begin{proof} We only consider the case $i=1$ and refer to Figure \ref{fig:inter20ASL}; the other case is analogous.
Consider the auxiliary problem in Figure \ref{fig:inter20ASL}({\em b}), where $V_\ell' = U_\ell + (0,\delta_{2,0},0)$. We simply shifted the left state in order to be able to solve the interaction as if it was with an actual $2$-wave. Indeed, by Proposition \ref{prop:RP} %\footnote{Not completely true: now rarefactions are cut...}
we uniquely find $\eps_1$, $\eps_3$ and states $V_p'$, $V_q'$ such that \eqref{eq:stimaRiemannAS} holds. Then, the interaction in Figure \ref{fig:inter20ASL}({\em a}) is solved by the same waves $\eps_1$, $\eps_3$ and by states $U_p' = V_p'-(0,\delta_{2,0},0)$, $U_q' = V_q'$. Finally, \eqref{eq:stimaRiemannAS} holds by construction.

Notice that we get the same result by shifting the other two states at the right. Indeed, consider the auxiliary problem in Figure \ref{fig:inter20ASR}({\em b}), where $V_r''= U_r - (0,\delta_{2,0},0)$ and $V_m'' = U_m - (0,\delta_{2,0},0)$.

%%%%%%%%%%%%%%%%%%%%%%%% Figure interactions 20ASR

\begin{figure}[htbp]
\begin{picture}(100,100)(-80,-15)
\setlength{\unitlength}{1pt}

%figure (a)

\put(70,0){

\put(0,0){\line(0,1){40}}
\put(-2,-5){\makebox(0,0){$\delta_{2,0}$}}
\put(30,10){\line(-1,1){30}}
\put(40,5){\makebox(0,0){$\delta_1$}}
\put(0,40){\line(0,1){30}}
\put(2,75){\makebox(0,0){$\delta_{2,0}$}}
\put(0,40){\line(-3,2){30}}
\put(-32,67){\makebox(0,0){$\eps_1$}}
\put(0,40){\line(1,1){30}}
\put(32,77){\makebox(0,0){$\eps_3$}}
\put(-20,40){\makebox(0,0){$U_\ell$}}
\put(20,40){\makebox(0,0){$U_r$}}
\put(20,0){\makebox(0,0){$U_m$}}
\put(-10,60){\makebox(0,0){$U_p''$}}
\put(9,60){\makebox(0,0){$U_q''$}}

%figure name
\put(0,-20){\makebox(0,0){$(a)$}} }
%end figure (a)

%figure (b)
\put(230,0){

\put(0,0){\line(0,1){40}}
\put(-2,-5){\makebox(0,0){$\delta_{2}$}}
\put(30,10){\line(-1,1){30}}
\put(40,5){\makebox(0,0){$\delta_1$}}
\put(0,40){\line(0,1){30}}
\put(2,75){\makebox(0,0){$\delta_{2}$}}
\put(0,40){\line(-3,2){30}}
\put(-32,67){\makebox(0,0){$\eps_1$}}
\put(0,40){\line(1,1){30}}
\put(32,77){\makebox(0,0){$\eps_3$}}
\put(-20,40){\makebox(0,0){$U_\ell$}}
\put(20,40){\makebox(0,0){$V_r''$}}
\put(20,0){\makebox(0,0){$V_m''$}}
\put(-10,60){\makebox(0,0){$V_p''$}}
\put(9,60){\makebox(0,0){$V_q''$}}

%figure name
\put(0,-20){\makebox(0,0){$(b)$}} }
%end figure (b)

\end{picture}

\caption{\label{fig:inter20ASR}{$(a)$: interaction with the $(2,0)$-wave solved with the Pseudo Accurate solver, case $i=1$; $(b)$: the auxiliary problem.}}
\end{figure}

%%%%%%%%%%%%%%%%%%%%%%%%%%%%% End Figure interactions 20A%%%

By Proposition \ref{prop:RP} we uniquely find $\eps_1$, $\eps_3$ (the same as before, since $u_{r}''-u_{l}=u_{r}-u_{l}'=u_{r}-u_{l}-\delta_{2,0}$) and states $V_p''$, $V_q''$. Then, the interaction in Figure \ref{fig:inter20ASR}({\em a}) is solved by the same waves $\eps_1$, $\eps_3$ and by states $U_p'' = V_p''$ and $U_q'' = V_q''+(0,\delta_{2,0},0)$. It is then straightforward to check that $U_p' = U_p''$ and $U_q' = U_q''$.
\end{proof}

Another solver is used below. We introduce it in the same framework of Proposition \ref{prop:RP}.

\begin{proposition}[Pseudo Simplified Solver]\label{prop:PS-SS}
Consider the interaction at time $t$ of a $\delta_{2,0}$-wave with an $i$-wave of strength $\delta_i$, $i=1,3$.
Then the Riemann problem at time $t$ can be solved by an $i$-wave of the same strength $\delta_i$ and a unique wave $\eps_{2,0}$, where
\begin{equation}\label{eq:inter20}
\eps_{2,0} = \left\{
\begin{array}{ll}
\delta_{2,0} + 2(a_r-a_\ell)h(\delta_1)& \hbox{ if $i=1$\,,}
\\
\delta_{2,0} - 2(a_r-a_\ell)h(\delta_3)& \hbox{ if $i=3$\,.}
\end{array}
\right.
\end{equation}
\end{proposition}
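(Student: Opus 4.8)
The plan is to check directly that the prescribed outgoing pattern — the incoming $i$-wave kept at strength $\delta_i$ together with a single composite wave $\eps_{2,0}$ — connects the given left state $U_\ell$ to the given right state $U_r$, the only unknown being the scalar $\eps_{2,0}$; existence and uniqueness then reduce to solving one scalar equation. I treat $i=1$, the case $i=3$ being symmetric. Following Figure~\ref{fig:inter20ASL}, the incoming configuration is, from left to right, $U_\ell\xrightarrow{\delta_{2,0}}U_m\xrightarrow{\delta_1}U_r$, with the $1$-wave sitting in the phase $\lambda_r$; the proposed solution is $U_\ell\xrightarrow{\delta_1}W\xrightarrow{\eps_{2,0}}U_r$, the $1$-wave now sitting in the phase $\lambda_\ell$, since it travels to the left while the composite wave stays vertical at $x=0$.

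First I would read off, from the composite-wave curve \eqref{eq:composite-wave} and the Lax curves \eqref{eq:lax13}--\eqref{h}, the right state produced along each ordering. Starting from $U_\ell=(v_\ell,u_\ell,\lambda_\ell)$, the incoming pattern gives
\[
U_r=\Bigl(\tfrac{a_r^2}{a_\ell^2}\,v_\ell\,e^{2\delta_1},\; u_\ell+\delta_{2,0}+2a_r h(\delta_1),\;\lambda_r\Bigr),
\]
whereas the proposed pattern produces
\[
U_r'=\Bigl(\tfrac{a_r^2}{a_\ell^2}\,v_\ell\,e^{2\delta_1},\; u_\ell+2a_\ell h(\delta_1)+\eps_{2,0},\;\lambda_r\Bigr).
\]
The key observation is that the $v$- and $\lambda$-components already agree for \emph{every} value of $\eps_{2,0}$: the phase is carried from $\lambda_\ell$ to $\lambda_r$ by the composite wave in both orderings, and the specific volume is multiplied by the same factor $(a_r^2/a_\ell^2)e^{2\delta_1}$ irrespective of whether the $1$-wave is crossed before or after the $(2,0)$-wave. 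Hence $U_r'=U_r$ collapses to the single velocity balance $2a_\ell h(\delta_1)+\eps_{2,0}=\delta_{2,0}+2a_r h(\delta_1)$, whose unique root $\eps_{2,0}=\delta_{2,0}+2(a_r-a_\ell)h(\delta_1)$ is precisely \eqref{eq:inter20}.

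The case $i=3$ is identical after placing the $3$-wave to the right of the composite wave; there the $3$-wave sits in phase $\lambda_\ell$ in the incoming configuration and in phase $\lambda_r$ in the outgoing one, so the velocity balance yields the opposite sign, $\eps_{2,0}=\delta_{2,0}-2(a_r-a_\ell)h(\delta_3)$. I expect no real obstacle beyond this bookkeeping: the whole mechanism is that specific volume and phase are insensitive to the order of crossing, so forcing the $i$-wave to retain its strength creates a mismatch only in the velocity, which is exactly what $\eps_{2,0}$ is free to absorb. The one point demanding care is that the $i$-wave contributes $2a(\lambda)h(\delta_i)$ with $\lambda$ the phase on its own side, and this side changes ($\lambda_r$ versus $\lambda_\ell$) between the two configurations; the resulting discrepancy $\pm 2(a_r-a_\ell)h(\delta_i)$ is the correction recorded in \eqref{eq:inter20}.
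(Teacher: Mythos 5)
Your proposal is correct and follows essentially the same route as the paper: both reduce the claim to the observation that the $v$- and $\lambda$-components are insensitive to the order in which the $i$-wave and the $(2,0)$-wave are crossed, so that matching the right state collapses to the single velocity balance $u_\ell+2a_\ell h(\delta_1)+\eps_{2,0}=u_\ell+\delta_{2,0}+2a_r h(\delta_1)$ (and its mirror for $i=3$). The only difference is cosmetic: the paper imports the commutation fact from \cite[Lemma 2]{AmCo06-Proceed-Lyon}, whereas you verify it directly from the wave curves \eqref{eq:lax13} and \eqref{eq:composite-wave}, which makes your write-up slightly more self-contained but does not change the argument.
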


%%%%%%%%%%%%%%%%%%%%%%%% Figure interactions 20

\begin{figure}[htbp]
\begin{picture}(100,100)(-80,-15)
\setlength{\unitlength}{1pt}

%figure (a)

\put(70,0){

\put(0,0){\line(0,1){40}}
\put(-2,-5){\makebox(0,0){$\delta_{2,0}$}}
\put(30,10){\line(-1,1){30}}
\put(40,5){\makebox(0,0){$\delta_1$}}
\put(0,40){\line(0,1){30}}
\put(2,75){\makebox(0,0){$\eps_{2,0}$}}
\put(0,40){\line(-1,1){30}}
\put(-32,77){\makebox(0,0){$\delta_1$}}
\put(-20,40){\makebox(0,0){$U_\ell$}}
\put(20,40){\makebox(0,0){$U_r$}}
\put(20,0){\makebox(0,0){$U_m$}}
\put(-8,60){\makebox(0,0){$U_q$}}

%figure name
\put(0,-20){\makebox(0,0){$(a)$}} }
%end figure (a)

%figure (b)
\put(230,0){

\put(0,0){\line(0,1){40}}
\put(-2,-5){\makebox(0,0){$\delta_{2,0}$}}

\put(-30,10){\line(1,1){30}}
\put(-33,3){\makebox(0,0){$\delta_3$}}

\put(0,40){\line(0,1){30}}
\put(2,75){\makebox(0,0){$\eps_{2,0}$}}

\put(0,40){\line(1,1){30}}
\put(35,75){\makebox(0,0){$\delta_3$}}

\put(-20,40){\makebox(0,0){$U_\ell$}}
\put(30,40){\makebox(0,0){$U_r$}}
\put(-10,10){\makebox(0,0){$U_m$}}
\put(10,60){\makebox(0,0){$U_q$}}

%figure name
\put(0,-20){\makebox(0,0){$(b)$}} }
%end figure (b)

\end{picture}

\caption{\label{fig:inter20}{Interactions with the $(2,0)$-wave solved with the Pseudo Simplified solver. $(a)$: from the right ($i=1$);
$(b)$: from the left ($i=3$).}}
\end{figure}
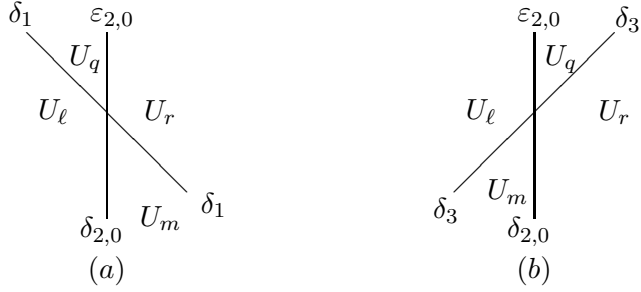

%%%%%%%%%%%%%%%%%%%%%%%%%%%%% End Figure interactions 20

\begin{proof} We refer to Figure \ref{fig:inter20}. We recall that, by \cite[Lemma 2]{AmCo06-Proceed-Lyon},
the commutation of a $1$-wave (or a $3$-wave) with the $2$-wave $\delta_2$ only modifies the $u$ component.

In the case when a $1$-wave interacts, it is easy to check that $u_q = u_\ell + 2a_\ell h(\delta_1)$ and $u_m=u_\ell + \delta_{2,0}$;
then, we compute $\eps_{2,0}$ by $u_\ell + 2a_\ell h(\delta_1) +\eps_{2,0} = u_\ell + \delta_{2,0} + 2a_rh(\delta_1)$.
The other case is analogous.
\end{proof}

%%%%%%%%%%%%%%%%%%%%%%%%%%%%%%%%%%%%%%%%%%%%%%%%%%%%%%%%%%%%%%%%%

\section{Approximate solutions}\label{sec:app_sol}
\setcounter{equation}{0}

We use Propositions \ref{prop:PS-AS} and \ref{prop:PS-SS} to build up the piecewise-constant approximate solutions to (\ref{eq:system}) that are needed for the wave-front tracking scheme \cite{Bressanbook,amadori-corli-siam}. We first approximate the initial data \eqref{init-data}: for any $\nu\in\naturali$ we take a sequence $(v^\nu_o,u^\nu_o)$ of piecewise constant functions with a finite number of jumps such that, denoting $p^\nu_o=a^2(\lambda_o)/v^\nu_o$,

\begin{enumerate}
\item $\tv \log(p^\nu_o)\leq \tv \log(p_o)$, $\tv u^\nu_o\leq \tv u_o$;

\item $\lim_{x\to-\infty} (v^\nu_o,u^\nu_o)(x)
  =\lim_{x\to-\infty} (v_o,u_o)(x)$;

\item $\|(v^\nu_o,u^\nu_o) - (v_o,u_o)\|_{\L1}\leq 1 /\nu$.
\end{enumerate}

We introduce two strictly positive parameters: $\eta=\eta_\nu$, that controls the size of rarefactions, and a threshold $\rho=\rho_\nu$, that determines which of the two Pseudo Riemann solver is to be used. Here follows a description of the scheme that improves the algorithm of  \cite{amadori-corli-siam} and adapts it to the current situation.

\begin{enumerate}[{\em (i)}]

\item At time $t=0$ we solve the Riemann problems at each point of jump of 
$(v^\nu_o, u^\nu_o, \lambda_o)(\cdot, 0+)$ as follows: shocks are not modified 
while rarefactions are approximated by fans of waves, each of them having size 
less than $\eta$. More precisely, a rarefaction of size $\eps$ is approximated 
by $N=[\eps/\eta]+1$ waves whose size is $\eps/N<\eta$; we set their speeds to 
be equal to the characteristic speed of the state at the right. 
Then $(v,u,\lambda)(\cdot,t)$ is defined until some wave fronts interact; by slightly changing the speed of some waves we can assume that only \emph{two} fronts interact at a time.

\item When two wave fronts of the families $1$ or $3$ interact, we solve the Riemann problem at the interaction point. If one of the incoming waves is a rarefaction, after the interaction it is prolonged (if it still exists) as a single discontinuity with speed equal to the characteristic speed of the state at the right.  If a new rarefaction is generated, we employ the Riemann solver described in step {\em (i)} and split the rarefaction into a fan of waves having size less than $\eta$.

\item When a wave front of family $1$ or $3$ with strength $\delta$ interacts with the composite wave at a time $t>0$, we proceed as follows:
    \begin{itemize}
    \item if $|\delta|\ge\rho$, we use the {\em Pseudo Accurate solver} introduced in 
    Proposition \ref{prop:PS-AS}, partitioning the possibly new rarefaction
    according to \emph{(i)};
    \item if $|\delta|<\rho$, we use the {\em Pseudo Simplified solver} of Proposition \ref{prop:PS-SS}.
    \end{itemize}

\end{enumerate}

%%%%%%%%%%%%%%%%%%%%%%%%%%%%%%%%%%%%%%%%%%%%%%%%%%%%%%%%%%%%%%%%%

\section{Interactions}\label{sec:interactions}
\setcounter{equation}{0}

In this section we analyze the interactions of waves. If $\delta_2=0$, i.e. if $a(\lambda_\ell)=a(\lambda_r)$, then the initial data \eqref{init-data} reduce \eqref{eq:system} to a $p$-system where the pressure $p$ only depends on $v$. The results of \cite{AmadoriGuerra01,amadori-corli-siam} apply and we recover the famous result of \cite{Nishida68}. Then, we assume from now on that $\delta_2\ne0$. For simplicity, we focus on the case
\[
a(\lambda_\ell) < a(\lambda_r)\,.
\]
As a consequence we have $\delta_2>0$; the other case is entirely similar.

For $t>0$ at which no interactions occur, and for $\xi\ge1$, $\knp>0$, $K\ge 1$ to be determined, we introduce the functionals
\begin{align}
  L & =  \sum_{\genfrac{}{}{0pt}{}{i=1,3}{\gamma_i>0}}|\gamma_i| +
  \xi\sum_{\genfrac{}{}{0pt}{}{i=1,3}{\gamma_i<0}}|\gamma_i|
  +\knp |\gamma_{2,0}|\,,\label{L-xi}
  \\[2mm]
  \nonumber
  V & =  \sum_{\genfrac{}{}{0pt}{}{i=1,3}{\gamma_i>0,\,\mathcal{A}}}|\gamma_i| +
  \xi\sum_{\genfrac{}{}{0pt}{}{i=1,3}{\gamma_i<0,\,\mathcal{A}}}|\gamma_i|\,, \quad\quad  Q = \delta_2 V,
\\[2mm]
\label{F}
F & =  L+ K\, Q\,.%|\delta_2| V\,.
\end{align}
By $\gamma_{2,0}$ we mean the strength of the composite wave. The summation in $V$ is performed only over the set $\mathcal{A}$ of waves {\em approaching} the
 front carrying the composite wave, namely the waves of the family $1$ (and $3$) located at the right (left, respectively) of $x=0$.
 The term  $Q$ %%$|\delta_2| V \doteq Q$
 is then the \lq\lq usual\rq\rq\ quadratic interaction potential due to the contact discontinuity at $x=0$. We also introduce
\begin{equation*}
\bar{L}  =  \sum_{i=1,3}|\gamma_i| = \frac 12 \tv(\log p(t,\cdot))\,.
\end{equation*}

\begin{remark}\rm
%\footnote{Check this remark.}
The functional defined in \eqref{F} differs from \cite[(5.1)]{amadori-corli-siam} because of the presence of the parameter $\xi$ in $V$ and, consequently, in the interaction potential $Q$, leading to better estimates and a more general result. %An extension of the functional to the case of a more general function $\lambda_o$ does not appear possible.
An extension of the functional to the case of a more general function
$\lambda_o$ appears possible, however with some extra condition on $\lambda_o$.
The specific case with only two phase interfaces is addressed in \cite{ABCD2}.
\end{remark}

Under the notation of Figure \ref{fig:Glimmone}, we shall make use of the identities \cite{AmCo06-Proceed-Lyon,Peng}
\begin{align}
\eps_3 -  \eps_1 & =   \alpha_3 + \beta_3 -\alpha_1 - \beta_1\,,
\label{tre-uno}
\\
a_\ell h(\eps_1) + a_r h(\eps_3) & =   a_\ell h(\alpha_1) + a_m
h(\alpha_3) + a_m h(\beta_1) + a_r h(\beta_3)\,. \label{tre-due}
\end{align}

%%%%%%%%%%%%%%%%% FIGURE GLIMMONE %%%%%%%%%%%%%%%

\begin{figure}[htbp]
\begin{picture}(100,80)(-120,0)
\setlength{\unitlength}{1pt}

\put(100,0){

%left
\put(-40,0){\line(1,2){20}} \put(-40,0){\line(0,1){40}}
\put(-40,0){\line(-1,1){40}}
\put(-62,30){\makebox(0,0){$\alpha_1$}}
\put(-47,32){\makebox(0,0){$\alpha_2$}}
\put(-18,30){\makebox(0,0){$\alpha_3$}}

%right
\put(60,0){\line(1,2){20}}
\put(60,0){\line(0,1){40}}
\put(60,0){\line(-3,5){24}}
\put(37,30){\makebox(0,0){$\beta_1$}}
\put(55,33){\makebox(0,0){$\beta_2$}}
\put(83,30){\makebox(0,0){$\beta_3$}}

%above
\put(10,40){\line(2,3){26}} \put(10,40){\line(0,1){40}}
\put(10,40){\line(-3,5){24}} \put(-13,70){\makebox(0,0){$\eps_1$}}
\put(17,73){\makebox(0,0){$\eps_2$}}
\put(38,70){\makebox(0,0){$\eps_3$}}

\put(-80,10){\makebox(0,0){$U_\ell$}}
\put(10,10){\makebox(0,0){$U_m$}}
\put(100,10){\makebox(0,0){$U_r$}}

}
\end{picture}
\caption{\label{fig:Glimmone}{A general interaction pattern.}}
\end{figure}
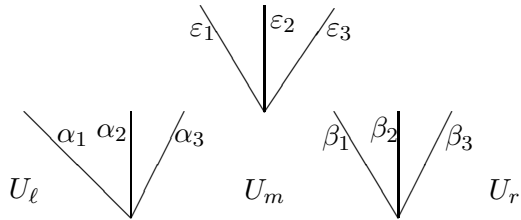

%%%%%%%%%%%%%%%%% END FIGURE GLIMMONE %%%%%%%%%%%%%%%

%%%%%%%%%%%%%%%%%%%%%%%%%%%%%%%%%%%%%%%%%%%%%%%%%%%%%%%%%%%%%%%%%

\subsection{Interactions with the composite wave}\label{subsec:Interactions20wave}

We first consider the interactions of a $1$- or $3$-wave with a $(2,0)$-wave. As in \cite{AmCo06-Proceed-Lyon}, we notice that they give rise to the following pattern of solutions:
\begin{equation}\label{eq:patterns}
\begin{array}{rclcrcl}
(2,0)\times 1R &\to& 1R+(2,0)+3R\,,&\quad &(2,0)\times 1S & \to & 1S+(2,0)+3S\,,
\\
3R\times (2,0) & \to& 1S+(2,0)+3R\,,
&\quad &3S\times (2,0) &\to& 1R+(2,0)+3S\,.
\end{array}
\end{equation}
In the following we often assume that, for some fixed $m>0$, any interacting $i$-wave, $i=1,3$, with strength $\delta_i$ satisfies
\begin{equation}\label{rogna}
|\delta_i|\le m\,.
\end{equation}
We usually denote with $\delta_k$ (and $\eps_k$) the interacting waves (respectively, the waves produced by the interaction).

\begin{lemma}\label{lem:interazioni}
Assume that a wave $\delta_i$, $i=1,3$, interacts with a $\delta_{2,0}$-wave.

If the Riemann problem is solved by the Pseudo Accurate solver, then the strengths $\eps_i$ of the outgoing waves satisfy $\eps_{2,0}=\delta_{2,0}$ and
\begin{align}
|\eps_i - \delta_i| = |\eps_j| & \le   \displaystyle
\frac{1}{2}\,\delta_2  |\delta_i|,\,\quad
i,j=1,3,\, i\ne j\,,
\label{eq:33Pengnew}
\\
|\eps_1|+|\eps_3| & \leq  \left\{
\begin{array}{ll}
|\delta_1| + \delta_2|\delta_1|&\qquad\hbox{ if $i=1$\,,}
\\
|\delta_3| &\qquad\hbox{ if $i=3$\,.}
\end{array}
\right.
\label{eq:stima-interazione-semplice}
\end{align}
If the Riemann problem is solved by the Pseudo Simplified procedure and we assume \eqref{rogna}, then there exists $C_o=C_o(m)$ such that
\begin{align}
|\eps_{2,0}-\delta_{2,0}|& \le  \displaystyle
C_o\, \delta_2 |\delta_i|\,.
\label{eq:stima-interazione-composta}
\end{align}
\end{lemma}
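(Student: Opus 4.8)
The plan is to treat the two Riemann solvers separately, reducing each bound to a scalar relation read off from Propositions~\ref{prop:PS-AS} and \ref{prop:PS-SS}, and then to control the resulting quantity using only the monotonicity and convexity of the function $h$ in \eqref{h}. Throughout I set $c=\tfrac12\delta_2=\frac{a_r-a_\ell}{a_r+a_\ell}\in(0,1)$, which is positive and less than $1$ since $a_\ell<a_r$.

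For the Pseudo Accurate solver I first write out the interaction geometry (Figure~\ref{fig:inter20ASL}): the middle state is obtained from $U_\ell$ by crossing the composite wave and then the incoming $i$-wave, so inserting it into \eqref{eq:stimaRiemannAS} yields, after a short computation of $\tfrac12\log(p_r/p_\ell)$ and of $u_r-u_\ell-\delta_{2,0}$, the scalar pair $\eps_3-\eps_1=-\delta_1$, $a_\ell h(\eps_1)+a_r h(\eps_3)=a_r h(\delta_1)$ when $i=1$, and $\eps_3-\eps_1=\delta_3$, $a_\ell h(\eps_1)+a_r h(\eps_3)=a_\ell h(\delta_3)$ when $i=3$. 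The identity $\eps_{2,0}=\delta_{2,0}$ is built into the solver, and the first relation in each case immediately gives the equality part of \eqref{eq:33Pengnew}, namely $\eps_1=\eps_3+\delta_1$ (so $|\eps_1-\delta_1|=|\eps_3|$) for $i=1$, and $\eps_3=\eps_1+\delta_3$ (so $|\eps_3-\delta_3|=|\eps_1|$) for $i=3$.

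For the inequality $|\eps_j|\le c|\delta_i|$ I would eliminate $\eps_i$ from the second relation, obtaining the vanishing of a function that is strictly increasing in the reflected strength $\eps_j$; by monotonicity the claim is then equivalent to a sign check of that function at the endpoint $\eps_j=\pm c\delta_i$. Splitting into the rarefaction and shock subcases dictated by the pattern \eqref{eq:patterns} makes $h$ equal to either the identity or $\sinh$ on each argument. In the fully linear subcase ($(2,0)\times 1R$) the bound holds with equality, $\eps_3=c\delta_1$; in the subcases involving $\sinh$ the sign check reduces to an elementary convexity inequality, e.g.\ for the reflected $1S$ to
\[
(1-c)\sinh\bigl((1+c)d\bigr)+(1+c)\sinh(cd)\le(1+c)\sinh(d),\qquad d=\delta_1<0,
\]
which follows by writing $d=(1-c)(1+c)d+c\,(cd)$ and using the convexity of $\cosh$ at the level of the derivative, and for the reflected $3R$ simply to $\sinh(c\delta_3)\ge c\delta_3$. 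Then \eqref{eq:stima-interazione-semplice} follows by combining $|\eps_j|\le c|\delta_i|$ with $\eps_i=\eps_j+\delta_i$ and the triangle inequality; tracking signs in fact gives $|\eps_1|+|\eps_3|=|\delta_3|$ for $i=3$ and $(1+\delta_2)|\delta_1|$ for $i=1$.

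For the Pseudo Simplified solver the estimate \eqref{eq:stima-interazione-composta} is immediate from the explicit formula \eqref{eq:inter20}: one has $|\eps_{2,0}-\delta_{2,0}|=2|a_r-a_\ell|\,|h(\delta_i)|=\delta_2(a_r+a_\ell)\,|h(\delta_i)|$, and under \eqref{rogna} the elementary bound $|h(\delta_i)|\le\frac{\sinh m}{m}\,|\delta_i|$ gives \eqref{eq:stima-interazione-composta} with $C_o=(a_r+a_\ell)\frac{\sinh m}{m}$, a constant depending only on $m$ and the fixed endpoints $a_\ell,a_r$. The main obstacle I anticipate is exactly the pair of convexity comparisons in the shock and mixed subcases of the Pseudo Accurate solver: these are genuinely nonlinear and the bound is sharp, so one must select the correct monotone auxiliary function (differentiating in the incoming strength and invoking $\sinh x\ge x$ or the convexity of $\cosh$) rather than a crude estimate; all the remaining steps are bookkeeping with the two linear relations and the triangle inequality.
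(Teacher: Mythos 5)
Your proof is correct and follows the same route the paper takes: the paper simply defers \eqref{eq:33Pengnew}--\eqref{eq:stima-interazione-semplice} to Proposition~\ref{prop:PS-AS} and the computations of \cite{AmCo06-Proceed-Lyon}, which are exactly the scalar reductions and monotonicity/convexity checks you carry out, and its proof of \eqref{eq:stima-interazione-composta} is the identical one-line estimate $|\eps_{2,0}-\delta_{2,0}|=2|a_r-a_\ell|\,|h(\delta_i)|\le C_o\,\delta_2|\delta_i|$ (the paper takes $C_o=2a_r\sinh m/m$, slightly cruder than your $(a_r+a_\ell)\sinh m/m$; both are admissible). The only blemishes are cosmetic: your labels ``reflected $1S$'' and ``reflected $3R$'' should read ``incoming $1S$'' (whose reflected wave is a $3S$) and ``incoming $3R$'' (whose reflected wave is a $1S$), the fourth pattern $3S\times(2,0)$ deserves its own, entirely analogous, convexity check, and for $i=1$ with an incoming shock the relation $|\eps_1|+|\eps_3|=(1+\delta_2)|\delta_1|$ is only an upper bound rather than an equality.
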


\begin{proof}
The estimates \eqref{eq:33Pengnew} and \eqref{eq:stima-interazione-semplice} easily follow from Proposition \ref{prop:PS-AS} and are carried out as  in \cite{AmCo06-Proceed-Lyon}.
The proof of the second part relies on the estimates of \cite[Proposition 5.12]{amadori-corli-siam}; we have
\begin{equation*}
|\eps_{2,0}-\delta_{2,0}| = 2|a_{r}-a_\ell|\,|h(\delta_i)|\leq 2 a_r \frac{\sinh m}m \delta_2\,
|\delta_i|\,,
\end{equation*}
whence (\ref{eq:stima-interazione-composta}) immediately follows once we set $C_o(m) \doteq 2a_{r} \sinh m/m$.
\end{proof}

\begin{proposition}\label{Delta-F-2wave}
Assume that a wave $\delta_i$, $i=1,3$, interacts with a $\delta_{2,0}$-wave at time $t$.

\noindent In the cases where the Pseudo Accurate procedure is used, then $\Delta F(t) < 0$ if
\begin{equation}\label{Kappa-mu}
  K > \max\left\{\frac{\xi-1}{2}\,,\,{1}\right\}\,.
\end{equation}
In the cases where the Pseudo Simplified procedure is used, then $\Delta F(t)< 0$ if
\begin{equation}\label{eq:knp}
\knp < \frac{K}{C_o}\,.
\end{equation}
\end{proposition}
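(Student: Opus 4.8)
The plan is to compute the change $\Delta F(t) = F(t+) - F(t-)$ across the interaction directly from the definitions \eqref{L-xi}--\eqref{F}, using the interaction estimates of Lemma~\ref{lem:interazioni}. The functional $F = L + KQ$ with $Q = \delta_2 V$, and since $\delta_2$ is unchanged by the interaction (as noted after the definition of the composite wave), I only need to track $\Delta L$ and $\Delta V$. I would organize the proof by the two solvers, and within each by the family $i=1$ versus $i=3$ of the incoming wave, since the approaching/non-approaching bookkeeping in $V$ differs between the two.

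\medskip

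\noindent\textbf{Pseudo Accurate case.} First I would use $\eps_{2,0} = \delta_{2,0}$ from Lemma~\ref{lem:interazioni}, so $|\gamma_{2,0}|$ is unchanged and the $\knp$ term drops out of $\Delta L$. For the incoming $1$-wave (located to the right of $x=0$, hence approaching), the pattern \eqref{eq:patterns} produces an outgoing $\eps_1$ (now to the left, no longer approaching) and a reflected $\eps_3$ (to the left, also not approaching). Using \eqref{eq:33Pengnew} and \eqref{eq:stima-interazione-semplice} I would bound the gain in $L$ by the weighted amplitudes of the outgoing waves and the loss by $\xi|\delta_1|$ or $|\delta_1|$ according to the sign; the crucial point is that after the interaction the incoming wave has crossed the interface and left the set $\mathcal A$, so $\Delta V \le -|\delta_1|$ (or $-\xi|\delta_1|$), making $\Delta Q = \delta_2\,\Delta V$ strictly negative and of order $\delta_2|\delta_1|$. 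I then want to choose $K$ large enough that the negative $K\,\Delta Q$ term dominates the positive contribution to $\Delta L$ coming from the newly created reflected wave, whose amplitude is controlled by $\tfrac12\delta_2|\delta_i|$. Matching the coefficients of $\delta_2|\delta_i|$ on the two sides produces exactly the threshold \eqref{Kappa-mu}, where the $\tfrac{\xi-1}{2}$ and the $1$ arise from the two sign cases (reflected shock versus rarefaction, with the $\xi$-weighting on compressive waves).

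\medskip

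\noindent\textbf{Pseudo Simplified case.} Here the incoming $i$-wave passes through with unchanged strength $\delta_i$, so $\bar L$ is essentially preserved and the only change in $L$ comes from the composite wave via the $\knp$ term: $\Delta L = \knp(|\eps_{2,0}| - |\delta_{2,0}|) \le \knp|\eps_{2,0}-\delta_{2,0}| \le \knp C_o\,\delta_2|\delta_i|$ by \eqref{eq:stima-interazione-composta}. Meanwhile the transmitted wave exits the approaching set, giving $\Delta V \le -|\delta_i|$ (up to the $\xi$ weight), so $K\,\Delta Q \le -K\,\delta_2|\delta_i|$. Requiring $\knp C_o < K$, i.e. $\knp < K/C_o$, forces $\Delta F < 0$, which is precisely \eqref{eq:knp}.

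\medskip

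\noindent\textbf{The main obstacle} I anticipate is the careful sign-and-location bookkeeping in $V$: one must verify in each of the four cases of \eqref{eq:patterns} that the incoming wave genuinely leaves $\mathcal A$ while the newly produced reflected wave does \emph{not} enter $\mathcal A$ (it emerges on the far side of the interface and travels away from it). Only then is the decrease $\Delta V \le -|\delta_i|$ clean and the quadratic term $K\,Q$ available as a strict surplus. Once this is checked, the inequalities \eqref{Kappa-mu} and \eqref{eq:knp} follow by comparing coefficients of $\delta_2|\delta_i|$, and the strictness of $\Delta F<0$ comes from the strict inequalities in the hypotheses together with $\delta_2\neq 0$.
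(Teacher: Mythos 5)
Your proposal follows essentially the same route as the paper's proof: in the Pseudo Accurate case you track $\Delta L$ via the reflected-wave bound \eqref{eq:33Pengnew} and exploit that the incoming wave leaves the approaching set $\mathcal A$ while neither outgoing wave enters it, so that $K\,\Delta Q\le -K\delta_2|\delta_i|$ dominates; in the Pseudo Simplified case you use \eqref{eq:stima-interazione-composta} exactly as the paper does. The coefficient matching you describe is precisely how \eqref{Kappa-mu} and \eqref{eq:knp} arise (the only slip is that the reflected $3$-wave emerges to the \emph{right} of the interface, not the left, though your conclusion that it is non-approaching is still correct).
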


\begin{proof}
We first consider the case where the Pseudo Accurate solver is used and use the notation of Figure~\ref{fig:inter20ASL}. By \eqref{tre-uno} and Lemma \ref{lem:interazioni}, we have
\begin{equation*}
\left\{
\begin{array}{lll}
%%i=1:\quad &
\eps_1-\delta_1 = \eps_3, &\quad |\eps_1|-|\delta_1| = |\eps_3|\,,  &\qquad \mbox{if  }i=1 \\[1mm]
\eps_1+\delta_3 = \eps_3, &\quad |\delta_3|-|\eps_1|  =  |\eps_3|\,, &\qquad \mbox{if  }i=3\,.
\end{array}
\right.
\end{equation*}
\smallskip\noindent{\fbox{$i=1$.}} If the interacting wave is a rarefaction, then $\Delta L = 2|\eps_3|\le \delta_2|\delta_1|$ and
$\Delta V =  - |\delta_1|$. Therefore, by \eqref{Kappa-mu} we deduce
\begin{equation}\label{eq:DeltaF2}
\Delta F = \Delta L + K \, \delta_2 \Delta V
\le \left\{ 1 - K \right\}\delta_2|\delta_1|  < 0\,.
\end{equation}
If the interacting wave is a shock, we have the same estimates with $\xi$ as a factor.

\smallskip\noindent{\fbox{$i=3$.}}
%In this case $\Delta \bar{L} =0$.
If the interacting wave is a shock, then $\Delta L = |\eps_1|+\xi|\eps_3|-\xi|\delta_3|=-(\xi-1)|\epsilon_1|\le0$, $\Delta V =-\xi|\delta_3|<0$ and
\begin{equation}\label{eq:2bs}
\Delta F = -(\xi-1)|\epsilon_1| -K\delta_2\xi|\delta_3| \le -K\delta_2\xi|\delta_3| <0\,.
\end{equation}
If the %interacting
wave is a rarefaction, then $\Delta L = \xi|\eps_1| + |\eps_3| - |\delta_3| = (\xi -1 )|\eps_1| \le (\xi -1)\, \delta_2|\delta_3|/2$ and
$\Delta V \,=\, - |\delta_3|$. By \eqref{Kappa-mu} we obtain again
\begin{equation}\label{eq:2Fr}
  \Delta F = \Delta L + K \, \delta_2\, \Delta V
\le  \delta_2|\delta_3|  \left\{ \frac{\xi -1}2 - K \right\} < 0\,.
\end{equation}

If the Pseudo Simplified solver is used, then $\Delta V \le -|\delta_i|$ ($i=1,3$) and $\Delta L =\knp|\eps_{2,0}|-\knp|\delta_{2,0}| \leq \knp
C_o\delta_2|\delta_i|$ by \eqref{eq:stima-interazione-composta}. Hence, by \eqref{eq:knp} we get
\begin{equation*}
\Delta F \leq \delta_2|\delta_i| (\knp C_o - K)<0\,.
\end{equation*}
\end{proof}

%%%%%%%%%%%%%%%%%%%%%%%%%%%%%%%%%%%%%%%%%%%%%%%%%%%%%%%%%%%%%%%%%

\subsection{Interactions between $1$- and $3$-waves}

In this subsection we analyze the interactions between $1$- and $3$-waves, see Figure \ref{fig:inter3133}.
%%%%%%%%%%%%%%%%%%%%%%%% inter31 %%%%%%%%%%%%%%%%%%%%%%%%%%%%%%

\begin{figure}[htbp]
\begin{picture}(100,80)(-130,-15)
\setlength{\unitlength}{0.8pt}

%Figure a
\put(-20,0){
\put(0,40){\line(-2,-3){30}}\put(-40,0){\makebox(0,0){$\delta_3$}}
\put(20,0){\line(-1,2){20}} \put(30,0){\makebox(0,0){$\delta_1$}}
\put(0,40){\line(1,2){15}} \put(20,75){\makebox(0,0){$\eps_3$}}
\put(0,40){\line(-1,1){30}} \put(-30,75){\makebox(0,0){$\eps_1$}}
\put(0,-20){\makebox(0,0){$(i)$}}
%end figure (a)
}

%Figure b1
\put(140,0){
\put(0,40){\line(1,-1){45}}
\put(15,0){\makebox(0,0){$\alpha_1$}}
\put(0,40){\line(2,-3){30}}
\put(55,0){\makebox(0,0){$\beta_1$}}
\put(0,40){\line(1,2){15}} \put(20,75){\makebox(0,0){$\eps_3$}}
\put(0,40){\line(-1,1){30}} \put(-30,75){\makebox(0,0){$\eps_1$}}
}
%
%Figureb2
\put(300,0){ \put(0,40){\line(-1,-1){45}}
\put(-55,0){\makebox(0,0){$\alpha_3$}}
\put(0,40){\line(-2,-3){30}}\put(-10,0){\makebox(0,0){$\beta_3$}}
\put(0,40){\line(1,2){15}} \put(20,75){\makebox(0,0){$\eps_3$}}
\put(0,40){\line(-1,1){30}} \put(-30,75){\makebox(0,0){$\eps_1$}}
\put(-80,-20){\makebox(0,0){$(ii)$}}
}
%end figure (b)

\end{picture}

\caption{\label{fig:inter3133}{Interactions of $1$- and $3$-waves.}}
\end{figure}
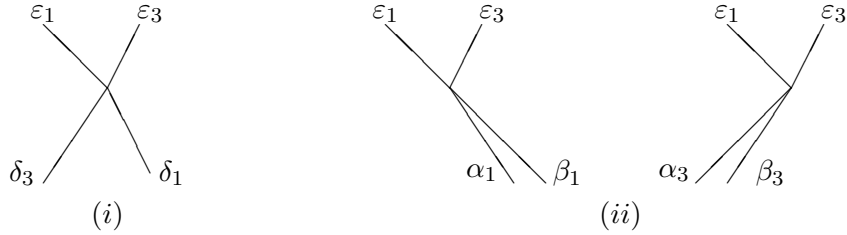

%%%%%%%%%%%%%%%%%%%%%%%%%%%%% inter 31
\begin{lemma}%[\cite{amadori-corli-siam}]
\label{lem:shock-riflesso}
For the interaction patterns in Figure \ref{fig:inter3133}, the following holds.

\begin{enumerate}[{(i)}]
\item Two interacting waves of different families
cross each other without changing strengths.

\item Let $\alpha_i$, $\beta_i$ be two interacting waves of the same family and $\eps_1$, $\eps_3$ the outgoing waves.

\begin{enumerate}[({ii}.a)]

\item  If both incoming waves are shocks, then the outgoing wave of the same family is a shock and satisfies $|\eps_i|>\max \{|\alpha_i|,|\beta_i|\}$; the reflected wave is a rarefaction.

\item  If the incoming waves have different signs, then the reflected wave is a shock; both the amounts of shocks and rarefactions of the $i$-family decrease across the interaction.  Moreover for $j\ne i $ and $\alpha_i<0<\beta_i$ one has
    \begin{align}\label{eq:chi_def}
    |\eps_j| & \le c(\alpha_i) \cdot \min\{|\alpha_i|,|\beta_i|\}\,,\qquad c(z) \doteq \frac{\cosh z -1}{\cosh z+1}\,.
    \end{align}
\end{enumerate}
\end{enumerate}
\end{lemma}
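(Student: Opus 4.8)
The plan is to read off everything from the two scalar interaction identities \eqref{tre-uno}--\eqref{tre-due}. Since the patterns in Figure~\ref{fig:inter3133} involve only waves of the first and third families and no phase wave, all three states carry the same value of $\lambda$, so $a_\ell=a_m=a_r$; dividing \eqref{tre-due} by this common value, the outgoing pair $(\eps_1,\eps_3)$ is characterized by
\[
\eps_3-\eps_1=\alpha_3+\beta_3-\alpha_1-\beta_1,\qquad h(\eps_1)+h(\eps_3)=h(\alpha_1)+h(\alpha_3)+h(\beta_1)+h(\beta_3),
\]
and by Proposition~\ref{prop:RP} this system has a unique solution. The entire proof is then the analysis of these two equations in the three configurations.

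For part (i) the incoming data are a $3$-wave $\alpha_3$ on the left and a $1$-wave $\beta_1$ on the right, with $\alpha_1=\beta_3=0$; the pair $\eps_1=\beta_1$, $\eps_3=\alpha_3$ solves both identities, so by uniqueness it \emph{is} the solution and the waves cross unchanged. For part (ii) I would eliminate one unknown: writing $x=\eps_i$, $y=\eps_j$ with $j\ne i$, the first identity gives $x$ as an affine function of $y$ and the incoming strengths, and substituting into the second produces a single equation $\Phi(y)=(\text{incoming data})$ with $\Phi$ strictly increasing (because $h$ is). This yields both the unique root and, by comparing $\Phi(0)$ with the right-hand side, the sign of the reflected wave $y$.

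The sign statements then follow from the monotonicity of the auxiliary function $g(t)=\sinh t+t$ (odd and strictly increasing). When both incoming waves are shocks ($\alpha_i,\beta_i<0$) the system collapses to $g(\eps_i)=g(\alpha_i)+g(\beta_i)$, which forces $\eps_i<\min\{\alpha_i,\beta_i\}<0$, hence $|\eps_i|>\max\{|\alpha_i|,|\beta_i|\}$, while the superadditivity $\sinh\alpha_i+\sinh\beta_i>\sinh(\alpha_i+\beta_i)$ for negative arguments gives $\eps_j>0$, a reflected rarefaction; this is (ii.a). For opposite signs $\alpha_i<0<\beta_i$, evaluating $\Phi(0)$ and using $\sinh\alpha_i<\alpha_i$ shows that $\Phi(0)$ exceeds the data in both regimes $\alpha_i+\beta_i\gtrless0$, so $\eps_j<0$: the reflected wave is a shock. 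The same comparison places $\eps_i$ between $\alpha_i$ and $0$ (shock-dominant) or between $0$ and $\beta_i$ (rarefaction-dominant), which is precisely the asserted decrease of both the shock and the rarefaction content of the $i$-family.

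The quantitative estimate \eqref{eq:chi_def} is the crux. I would regard the reflected strength $y=\eps_j<0$ as a function of the incoming rarefaction $\beta_i$, with $\alpha_i$ fixed. While $\eps_i<0$, implicit differentiation of the scalar equation gives the reflection coefficient
\[
-\frac{dy}{d\beta_i}=\frac{\cosh\eps_i-1}{\cosh\eps_i+\cosh\eps_j}\,,
\]
and since $\cosh\eps_j\ge1$, $\cosh\eps_i\le\cosh\alpha_i$ (because $\eps_i$ lies between $\alpha_i$ and $0$), and $s\mapsto(s-1)/(s+1)$ is increasing, this coefficient is bounded by $c(\alpha_i)$. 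Integrating from $\beta_i=0$, where $y=0$, yields $|\eps_j|\le c(\alpha_i)\,\beta_i$, which is the claim in the regime where $\beta_i=\min\{|\alpha_i|,|\beta_i|\}$. The remaining, and genuinely delicate, case is when the shock is the weaker wave: there $\eps_i$ becomes a rarefaction, $y$ saturates at the value $\eta$ solving $g(\eta)=\sinh|\alpha_i|-|\alpha_i|$, and one must establish $\eta\le c(\alpha_i)\,|\alpha_i|$. I expect this to be the main obstacle, since the bound is nearly attained as $|\alpha_i|\to\infty$ (note $c(z)=\tanh^2(z/2)\to1$): the crude estimate $\cosh\eps_j\ge1$ only gives $\eta\le(\sinh|\alpha_i|-|\alpha_i|)/2$, which is far too large for big $|\alpha_i|$, so the sharp bound must instead exploit that $\cosh\eps_j$ is close to $\cosh\alpha_i$ in this regime, e.g. through the factorization $2\cosh\tfrac{|\alpha_i|+\eta}{2}\,\sinh\tfrac{|\alpha_i|-\eta}{2}=|\alpha_i|+\eta$ coming from $\sinh|\alpha_i|-\sinh\eta=|\alpha_i|+\eta$.
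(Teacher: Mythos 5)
Your reduction of everything to the two identities \eqref{tre-uno}--\eqref{tre-due}, the sign analysis via $g(t)=\sinh t+t$, and your treatment of the regime where the outgoing $i$-wave is still a shock are all sound; in fact your direct bound on the reflection coefficient, $0<-dy/d\beta_i=(\cosh\eps_i-1)/(\cosh\eps_i+\cosh\eps_j)\le c(\alpha_i)$ followed by integration from $\beta_i=0$, is a legitimate alternative to the paper's argument, which instead computes $y''<0$ and uses concavity of $x\mapsto y(x;z)$ to get $y(x;z)\le y'(0;z)\,x=c(z)x$. The two routes are essentially equivalent in that regime and equally valid.

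The genuine gap is exactly where you flag it yourself: the inequality $\eta\le c(z)z$ for the saturated reflected shock, where $\sinh\eta+\eta=\sinh z-z$ and $z=|\alpha_i|$. This is not a peripheral case --- it is needed whenever $|\alpha_i|\le|\beta_i|$, including the part of the shock-output regime with $|\alpha_i|<\beta_i<x_o(|\alpha_i|)$ (reached via the monotonicity $y'>0$, as you note), and \eqref{eq:chi_def} is simply not established without it. Your proposed route through the factorization $\sinh z-\sinh\eta=2\cosh\frac{z+\eta}{2}\sinh\frac{z-\eta}{2}$ is only a guess and does not obviously close: bounding $\sinh\frac{z-\eta}{2}$ from below by $\frac{z-\eta}{2}$ yields $z-\eta\le(z+\eta)/\cosh\frac{z+\eta}{2}$, which is a \emph{lower} bound on $\eta$, not the upper bound you need. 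The paper closes this case in a few lines: with $G(y,z)=\sinh y+y-\sinh z+z$ one has $G_y>0$, so it suffices to check $G(c(z)z,z)>0$; by the Mean Value Theorem $\sinh(c(z)z)-\sinh z=(c(z)-1)z\cosh\zeta$ for some $\zeta\in(c(z)z,z)$, and since $c(z)-1<0$ this exceeds $(c(z)-1)z\cosh z$, whence $G(c(z)z,z)>z\left[c(z)+1+(c(z)-1)\cosh z\right]=0$ by the identity $1+c(z)=(1-c(z))\cosh z$. You would need to supply this (or an equivalent) argument for your proof to be complete; everything else in your proposal either matches the paper or is covered by the results it cites from the earlier work.
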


\begin{remark}\rm The inequality \eqref{eq:chi_def} generalizes the one stated in \cite[Lemma B.1]{amadori-corli-siam}
for the case $SR$, $RS\to SS$; moreover, in that case we provide below a simpler proof.

%Moreover we notice that the coefficient $c(m)$ coincides with the \cite{Schochet-Glimm}
%See \cite{AmCo12-Chambery} and...
\end{remark}

\begin{proofof}{Lemma \ref{lem:shock-riflesso}} We only need to prove \eqref{eq:chi_def}, the rest being already proved in \cite[Lemmas 5.4--5.6]{amadori-corli-siam}. For simplicity we assume $i=3$ and distinguish between two cases according to the outgoing wave $\eps_3$.
%Notice that, if this case occurs (that is, when a rarefaction emerges from the interaction), then the rarefaction must be sufficiently large to overcome the shock.
Indeed, we remark that there exists a function $x_o(\cdot)$ such that $\eps_3$ is a rarefaction iff $\beta_3\ge x_o(|\alpha_3|)$; see \cite[Lemma B.1]{amadori-corli-siam}. In the limiting case $\beta_3= x_o(|\alpha_3|)$ the shock and the rarefaction cancel each other and $\eps_3=0$; the interaction gives only rise to the reflected wave $\eps_1$. By setting $x=|\beta_{3}|$ and $z=|\alpha_3|$, from \eqref{tre-uno} and \eqref{tre-due} we find the equation valid for $\eps_3=0$, namely
$$
\sinh(x-z) -\sinh z +x=0\,,
$$
which implicitly defines the function $x=x_o(z)$.

\paragraph{\fbox{$SR,\, RS\to SR$}}\quad The starting point is to specialize \eqref{tre-uno} and \eqref{tre-due} to the present case:
%In questo caso, le equazioni per $\alpha_{3},\beta_{3},\eps_{1},\eps_{3}$ si riscrivono come
\begin{align}
    |\eps_1|+|\eps_3|&=-|\alpha_3|+|\beta_3|\,,\label{1}\\
    \sinh(|\eps_1|)-|\eps_3|&=\sinh(|\alpha_3|)-|\beta_3|\,.\label{2}
\end{align}
By summing up \eqref{1} and \eqref{2}  we find that
\begin{equation}
    \label{4}
    \sinh(|\eps_1|)+|\eps_1|=\sinh(|\alpha_3|)-|\alpha_3|\,.
\end{equation}
To prove \eqref{eq:chi_def} it is enough to prove that    %%%%Quello che vogliamo ottenere \`e una stima del tipo
\begin{equation}
    \label{3}
    |\eps_1|\le c(\alpha_3) |\alpha_3|\,.
\end{equation}
Indeed, from \eqref{1} we infer that $|\alpha_3|<|\beta_3|$ and therefore \eqref{3} implies \eqref{eq:chi_def}.

To prove \eqref{3}, we introduce the notation $|\eps_1|=y$ and $|\alpha_3|=z$, so that \eqref{4} rewrites as
$$
 G(y,z)\doteq\sinh y+y-\sinh z+z =0\,.
$$
By a simple application of the Implicit Function Theorem, there exists a function $y=y(z)\ge 0$, defined for all $z\ge0$, such that $G\left(y(z),z\right)=0$.

Since $G_{y}(y,z)=\cosh y +1>0$, in order to prove that $y(z)\le c(z)z$ it is enough to prove that $g(z)\doteq G(c(z)z,z)>0$, that is
%$G(0,z)<0$ for $z>0$ and $y\mapsto G(y,z)\to\infty$ as $y\to\infty$
\begin{equation}
    \label{stima-comune}
    g(z)=(c(z)+1)z+\sinh(c(z)z)-\sinh z>0\,.
\end{equation}
Using the fact that $c(z)z<z$, the Mean Value Theorem and the simple identity
\begin{equation*}%   \label{relaz-cm}
    1+c(z)=\left(1-c(z)\right)\cosh z\,,
\end{equation*}
we find that
$$
g(z)=\left(c(z)+1\right)z + \left(c(z)z -z\right) \cosh\zeta >  z\left[ c(z)+1+ \left(c(z) - 1\right) \cosh z \right] =0\,,
$$
for $c(z)z<\zeta<z$. Hence, we have proved \eqref{stima-comune}.

\paragraph{\fbox{$SR,\, RS\to SS$}}\quad Again, we start from \eqref{tre-uno} and \eqref{tre-due} that can now be rewritten as
%In questo caso, le equazioni per $\alpha_{3},\beta_{3},\eps_{1},\eps_{3}$ si riscrivono come
\begin{align}
    |\eps_1|-|\eps_3|&=-|\alpha_3|+|\beta_3|\,,\label{1-SS}\\
    \sinh(|\eps_1|)+\sinh(|\eps_3|)&=\sinh(|\alpha_3|)-|\beta_3|\,.\nonumber%\label{2-SS}
\end{align}
Set $x=|\beta_{3}|$, $y=|\eps_{1}|$,
$z=|\alpha_{3}|$ and define the function
$$
F(x,y;z)=\sinh y+\sinh(y-x+z)-\sinh z+x\,,
$$
which is subject to the constraints
$$
z\geq0,
\quad
0\leq x< x_{o}(z),
\quad
\max\{0,x-z\}<y<\min\{x,z\}\,.
$$
%Here $x_o(z)$ denotes the maximal amplitude of the rarefaction $\beta_3$ in order to produce an outgoing shock. More precisely,
%for $\beta_3=x_o(|\alpha_3|)$ the two incoming waves cancel each other, which means that the outcoming wave $\eps_3$ vanishes.
By the Implicit Function Theorem, there exists a function $y=y(x;z)$ such that $F\left(x,y(x;z);z\right)\equiv 0$. 
Moreover, by denoting with $y'$ the derivative of $y$ with respect to $x$ and so on, we have
\begin{gather*}
    y'=-\frac{F_{x}}{F_{y}}\,,\qquad
    y''=-\frac{F_{xx}+2F_{xy}y'+F_{yy}(y')^{2}}{F_{y}}\,,
\end{gather*}
where
\begin{gather*}
    F_{x}=1-\cosh(y-x+z) <0,
    \quad
    F_{y}=\cosh(y-x+z)+\cosh y >0\,,\\
    F_{xx}=-F_{xy}= \sinh(y-x+z)>0\,,\quad
%    F_{xy}=-\sinh(y-x+z)<0\,,\\
    F_{yy}=\sinh(y-x+z)+\sinh y>0\,.
\end{gather*}
Therefore $y'>0$ and
\begin{gather*}
    y''(x;z)=-\frac{\sinh\left(y-x+z\right)(1-y')^{2} + \sinh\left(y\right)(y')^{2}}{F_{y}} <0\,.
\end{gather*}
Hence $x\mapsto y(x;z)$ is concave down and thus
$$
y(x;z)\leq y'(0;z)x=c(z)x\,.
$$
To complete the proof of \eqref{eq:chi_def}, it remains to prove that $y(x;z)\leq c(z)z$. To do this, simply recall that $y'>0$ and then
$$
y(x;z)\leq y\left(x_o(z);z\right) \leq c(z)z\,,
$$
where the last inequality holds because it coincides with \eqref{3} in the limiting case $\beta_3=x_o(z)$, $z=|\alpha_3|$.
\end{proofof}

\begin{remark}{\rm
Under the notation of the proof of case {\em (ii.b)} in Lemma \ref{lem:shock-riflesso}, i.e., $x=\beta_i$, $z=|\alpha_i|$, we see that
the size of the reflected shock is
\begin{equation}\label{eq:eps_j-explicit}
|\eps_j| = \left\{
\begin{array}{ll}
y(x;z) & \hbox { if } x \le x_o(z)\,,
\\
y(z) & \hbox { if } x > x_o(z)\,.
\end{array}
\right.
\end{equation}
The strength $\eps_j$ is a continuous function of $x$ since $y\left(x_o(z);z\right) = y(z)$ for every $z$.
In particular, assume that $\beta_i> x_o(|\alpha_i|)$, so that $\eps_i$ is a rarefaction. For $\beta_i$ in this range, the size of $\eps_j$ does not change by \eqref{eq:eps_j-explicit} and the part of $\beta_i$ exceeding $x_o(|\alpha_i|)$ is entirely propagated along $\eps_i$. This holds since the interaction only affects that part of $\beta_i$ whose amplitude is exactly $x_o(|\alpha_i|)$. We refer to Figure \ref{fig:y(x;3)} for a graph of $|\eps_j|$ as a function of $\beta_i$.

We notice that this behavior of $\eps_j$ is mimicked by the damping coefficient $c$ in \eqref{eq:chi_def}, which only depends on the size of $\alpha_i$. }

%%%%%%%%%%%%%%%%%%%%%%%%%%%%%%%%%%%%

\begin{figure}[htbp]
\begin{center}
{\includegraphics[angle=90,width=10cm]{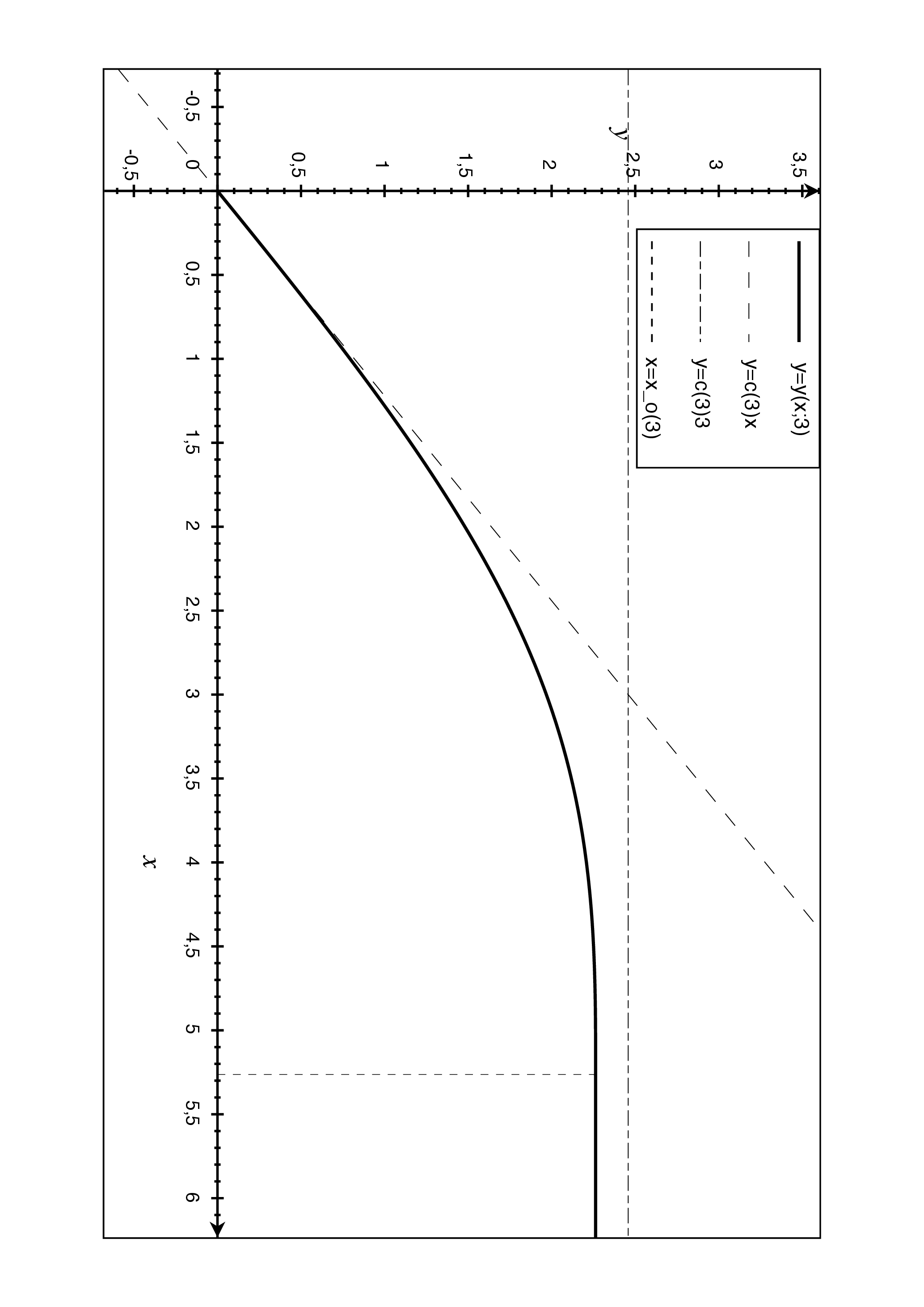}}
\end{center}
\vspace{-6mm}
\caption{The reflected shock in case {\em (ii.b)} of
Lemma~\ref{lem:shock-riflesso}.
The solid curve is the graph of $|\eps_j|=y$ as a function of $\beta_i=x$, for $|\alpha_i|=z=3$; see \eqref{eq:eps_j-explicit}. The vertical line marks the passage of $\eps_i$ from shock to rarefaction; on its right, $|\eps_j|$ assume the constant value $y=(x_o(z);z)$.
The two remaining dashed lines refer to the bounds in \eqref{eq:chi_def}; in particular, since $\lim_{z\to+\infty} \left(c(z)z-y(x_o(z);z) \right)=0$, the horizontal bound becomes asymptotically accurate.}
\label{fig:y(x;3)}
\end{figure}

%%%%%%%%%%%%%%%%%%%%%%%%%%%%%%%%%%%%

\end{remark}

\begin{remark}\label{rem-d(m)}\rm
In case {\em (ii.a)} of Lemma \ref{lem:shock-riflesso}, one can prove for the reflected rarefaction that
\begin{equation}\label{eq:eps_j-rar}
|\eps_j| \le d\left(\max\{|\alpha_i|,|\beta_i|\}\right) \min\left\{ |\alpha_i|,|\beta_i|\right\},
\end{equation}
for a suitable function $d(z)>c(z)$; see \cite[Lemma 5.6]{amadori-corli-siam}. Estimate \eqref{eq:eps_j-rar} is analogous to \eqref{eq:chi_def} but the damping coefficient $d\left(\max\{|\alpha_i|,|\beta_i|\}\right)$ cannot be replaced by $c\left(\max\{|\alpha_i|,|\beta_i|\}\right)$.
%does not hold with damping coefficient $c\left(\max\{|\alpha_i|,|\beta_i|\}\right)$.
This easily follows by a second order expansion of the function $\tau(a,b)$ in \cite[Lemma 5.6]{amadori-corli-siam} or simply by arguing as in the proof of case {\em (ii.b)}. However, we shall see in the following proposition that the decreasing of the functional $F$ only depends on the coefficient $c$ and not on $d$.
\end{remark}

\begin{proposition}\label{prop:DeltaF33}
Consider the interactions of two wave fronts of the same family $1$ or $3$, and assume \eqref{rogna}. Then $\Delta F \le 0$ if
\begin{equation}\label{eq:sogliazza}
1 < \xi \le  \frac{1}{c(m)}\quad\hbox{ and }\quad
 K \le \frac{\xi-1}{\delta_2}\,.
\end{equation}
\end{proposition}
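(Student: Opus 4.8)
The plan is to show $\Delta F \le 0$ by splitting into the two interaction types of Figure~\ref{fig:inter3133} and controlling the three contributions to $F = L + KQ = L + K\delta_2 V$ separately. Since a $1$--$3$ interaction of different families leaves strengths unchanged by part~(i) of Lemma~\ref{lem:shock-riflesso}, and does not move waves across $x=0$, that case gives $\Delta F = 0$ trivially. The real work is the same-family case, part~(ii), where a reflected wave $\eps_j$ of the opposite family is created; here I must track how $L$ and $V$ change and balance the (possibly positive) increase in $L$ coming from the new reflected wave against the (negative) change in $Q$.

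First I would handle case~(ii.a), both incoming shocks. Here the outgoing $i$-shock $\eps_i$ satisfies $|\eps_i| > \max\{|\alpha_i|,|\beta_i|\}$ and the reflected wave $\eps_j$ is a rarefaction. Writing $\Delta L$ using the weights $\xi$ on shocks and $1$ on rarefactions, the shock part of $\Delta L$ is negative (two incoming shocks merge into a bigger one, but by the conservation identity \eqref{tre-uno} and the definition of strengths the total shock strength in the $i$-family is controlled), while the created rarefaction contributes $+|\eps_j|$. The key inequality I need is that this rarefaction is small: I would use \eqref{eq:eps_j-rar} from Remark~\ref{rem-d(m)}, but the crucial observation flagged there is that the decrease of $F$ depends only on the coefficient $c$, not on the larger $d$. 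So I expect to show $\Delta L \le (\xi-1)\cdot(\text{something}) - (\xi-1)|\eps_j|$ type cancellation, or more precisely bound $\Delta L$ by a negative multiple of $|\eps_j|$ once $\xi \le 1/c(m)$. The change $\Delta V$ should be $\le 0$ since the merged wave is no more approaching than the incoming ones, so $K\delta_2 \Delta V \le 0$ and combines favorably.

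For case~(ii.b), incoming waves of opposite sign, the reflected wave $\eps_j$ is a shock and Lemma~\ref{lem:shock-riflesso} gives the sharp bound $|\eps_j| \le c(\alpha_i)\min\{|\alpha_i|,|\beta_i|\}$ via \eqref{eq:chi_def}. Here both shock and rarefaction amounts of the $i$-family decrease, so the $i$-family part of $\Delta L$ is $\le 0$; the only positive term is the reflected shock, weighted by $\xi$, giving $+\xi|\eps_j|$. I would bound the decrease in the $i$-family strengths from below by $\min\{|\alpha_i|,|\beta_i|\}$ (up to the rarefaction weight), and then use \eqref{eq:chi_def} with $c(\alpha_i)\le c(m)$ (monotonicity of $c$ under \eqref{rogna}) to get $\xi|\eps_j| \le \xi c(m)\min\{\cdots\}$. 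The condition $\xi \le 1/c(m)$, i.e. $\xi c(m) \le 1$, is exactly what makes the positive reflected-shock term no larger than the negative primary-family term, yielding $\Delta L \le 0$. Again $\Delta V \le 0$.

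The main obstacle I anticipate is the bookkeeping in $\Delta L$ for case~(ii.a): the merged outgoing shock is strictly larger than either incoming shock, so the naive bound on $\Delta L$ is not obviously controlled by $|\eps_j|$ alone, and one must use the exact identities \eqref{tre-uno}, \eqref{tre-due} to relate the shock-strength increase to the reflected rarefaction. The second condition $K \le (\xi-1)/\delta_2$ in \eqref{eq:sogliazza} then enters to absorb any residual positive term through $K\delta_2\Delta V$: when a wave is reflected, its family membership relative to the interface may change, so $\Delta V$ can pick up a term of size comparable to $|\eps_j|$, and multiplying by $K\delta_2 \le \xi-1$ ensures this is dominated. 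I would verify that in every sub-case the two inequalities in \eqref{eq:sogliazza} together force $\Delta F \le 0$, with the first controlling the reflected wave against the primary family and the second calibrating the quadratic potential's weight $K\delta_2$ against the shock-weight gap $\xi-1$.
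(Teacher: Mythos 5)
Your overall architecture is right (trivial different-family case, split of the same-family case by signs, use of \eqref{eq:chi_def} in the opposite-sign case), but the case analysis contains a claim that is false and that, once corrected, exposes a quantitative gap. You assert in both sub-cases of (ii) that $\Delta V\le 0$. This fails: the reflected wave $\eps_j$ belongs to the \emph{opposite} family, so its status with respect to the approaching set $\mathcal A$ is opposite to that of the incoming waves. For instance, two $3$-waves interacting at a point $x>0$ are not approaching, but the reflected $1$-wave there is; then $\Delta V=+|\eps_j|$ if $\eps_j$ is a rarefaction and $+\xi|\eps_j|$ if it is a shock. Your closing paragraph acknowledges this possibility, but the bounds you actually establish are then not strong enough: in case (ii.b) you only derive $\Delta L\le 0$, whereas to absorb $K\delta_2\,\Delta V=K\delta_2\,\xi|\eps_j|$ you need the quantitative estimate $\Delta L\le -\xi(\xi-1)|\eps_j|$ (this is \eqref{Delta_L_xi_13-SR}, obtained from \eqref{tre-uno}--\eqref{tre-due} together with $\xi|\eps_j|\le\min\{|\alpha_i|,|\beta_i|\}$, which is exactly where $\xi\le 1/c(m)$ and \eqref{eq:chi_def} enter); only then does $K\delta_2\le\xi-1$ close the case.

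In case (ii.a) the route through \eqref{eq:eps_j-rar} would not work: there the damping coefficient is $d>c$, so $\xi\le 1/c(m)$ does not give $\xi|\eps_j|\le\min\{|\alpha_i|,|\beta_i|\}$, and Remark~\ref{rem-d(m)} warns precisely that $d$ cannot be replaced by $c$. No damping estimate is needed here: since $\eps_i$ is a shock and $\eps_j$ a rarefaction, \eqref{tre-uno} gives $|\eps_i|+|\eps_j|=|\alpha_i|+|\beta_i|$ exactly, whence $\Delta L=-(\xi-1)|\eps_j|$ identically (the paper's \eqref{Delta_L_xi_13}, valid for every $\xi\ge1$); combined with $\Delta V\le|\eps_j|$ (the reflected rarefaction carries weight $1$) and $K\delta_2\le\xi-1$, this yields $\Delta F\le 0$. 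So the two conditions in \eqref{eq:sogliazza} split cleanly --- the first is used only in the opposite-sign case, the second only to dominate the quadratic term against the weight gap $\xi-1$ --- rather than both being needed in every sub-case as your sketch suggests. You correctly identified the bookkeeping in (ii.a) as the delicate point and pointed to the exact identities as the cure; what is missing is carrying that out to the precise signed identity, and correcting the sign analysis of $\Delta V$.
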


\begin{proof}
The proof takes into account the possible wave configurations.
We use the notation of Lemma \ref{lem:shock-riflesso} and  assume $i=3$. %The identity \eqref{tre-uno} reduces to
%\begin{equation}\label{eq:3133}
%\eps_3-\eps_1 = \alpha_3+\beta_3\,.
%\end{equation}
%The proof takes into account the possible wave configurations; in all cases we prove

\paragraph{\fbox{$SS\to RS$}}\quad
We start by proving that
\begin{equation}\label{Delta_L_xi_13}
\Delta L + |\eps_1|(\xi - 1) =  0\,,
\end{equation}
that holds for all $\xi\ge 1$. Indeed, in this case one has $\Delta \bar{L}=0$ by \eqref{tre-uno} %{eq:3133}
and then
\begin{equation*}%\label{Delta_xi_L_xi_SSRS}
  \Delta L + (\xi -1) |\eps_1| = \xi (|\eps_1| +
  |\eps_3|-|\alpha_3|- |\beta_3|)= 0\,.
\end{equation*}
If $\Delta V>0$ then $\Delta V=|\eps_1|$; hence, by \eqref{eq:sogliazza} and \eqref{Delta_L_xi_13} we obtain
\begin{equation*}
  \Delta F
\le |\eps_1| \left\{ - (\xi-1) + K \delta_2\right\} \le 0\,.
\end{equation*}

\paragraph{\fbox{$SR,\, RS\to SR,\, SS$}}\quad Assume $\alpha_3<0<\beta_3$. We now
prove the stronger inequality
\begin{equation}\label{Delta_L_xi_13-SR}
\Delta L + |\eps_1|\xi (\xi - 1)  \le  0\,.
\end{equation}
%which implies (\ref{Delta_L_xi_13}).

If $\epsilon_3$ is a shock, then %\eqref{Delta_L_xi_13-SR} rewrites as
%$$\xi^2 |\eps_1| + \xi(|\eps_3|-|\alpha_3|) - |\beta_3|\le 0\,.$$
%To prove the inequality above,
we use \eqref{1-SS}%{eq:3133}
, \eqref{eq:chi_def} and (\ref{eq:sogliazza})${}_1$ to obtain
\begin{align*}
\Delta L + |\eps_1|\xi (\xi - 1) &=\xi^2 |\eps_1| + \xi(|\eps_3|-|\alpha_3|) - |\beta_3|\\
&= \xi^2 |\eps_1| + \xi(|\eps_1|-|\beta_3|) - |\beta_3|\\
&= (\xi+1) (\xi|\eps_1| - |\beta_3|) \le 0\,.
\end{align*}
Therefore \eqref{Delta_L_xi_13-SR} holds in this case.

On the other hand, if $\epsilon_3$ is a rarefaction, then the left hand side of
\eqref{Delta_L_xi_13-SR} turns out to be %$\xi^2 |\eps_1| + |\eps_3| -\xi|\alpha_3| - |\beta_3|$. Then it is bounded by
\begin{align*}%\label{Delta_xi_L_xi_SRSR}
\xi^2 |\eps_1| + |\eps_3| -\xi|\alpha_3| - |\beta_3|\,.
%&\le \xi^2 |\eps_1| + \xi(|\eps_3|-|\alpha_3|) - |\beta_3|\\
%&= (\xi+1) (\xi|\eps_1| - |\beta_3|) \le 0\,,
\end{align*}
From \eqref{1} %{eq:3133}
we have
$|\eps_3|< |\beta_3|$, while (\ref{eq:chi_def}) and (\ref{eq:sogliazza})${}_1$ imply $\xi |\eps_1|\le |\alpha_3|$. This completely  proves \eqref{Delta_L_xi_13-SR}.

If $\Delta V>0$, then $\Delta V=\xi|\eps_1|$ and hence
\begin{equation}
  \Delta F
  \le  \xi |\eps_1| \left\{ - (\xi-1) + K \delta_2\right\} \le 0
\end{equation}
by \eqref{eq:sogliazza}${}_2$.  This concludes the proof of the lemma.
\end{proof}

%\begin{remark}
%In the above proof we showed that in every case we have the estimate
%\[
%\xi\cdot \xi_{\epsilon_1}|\epsilon_1| + \xi_{\epsilon_3}|\epsilon_3|- \xi_{\alpha_3}|\alpha_3| - \xi_{\beta_3}|\beta_3| \le 0.
%\]
%\end{remark}

%%%%%%%%%%%%%%%%%%%%%%%%%%%%%%%%%%%%%%%%%%%

%\newpage
\subsection{Decreasing of the functional $F$ and control of the variations}
In order that $\Delta F\le 0$ at any interaction, we need $K$ to satisfy both \eqref{Kappa-mu} and $\eqref{eq:sogliazza}_2$:
\begin{equation}\label{cond_K_xi}
  \max\left\{\frac{\xi-1}{2}\,,\,{1}\right\}< K \le \frac{\xi-1}{\delta_2}\,. %%%% \le deve cambiare in < per avere \mu<1 !!
\end{equation}
This is possible if $1+\delta_2 < \xi$; hence, by $\eqref{eq:sogliazza}_1$ we require that $\xi$ satisfies
\begin{equation}\label{cond_xi_delta2}
  1+\delta_2 < \xi \le  \frac{1}{c(m)} %\frac{1}{\sqrt{d(m)}}
  \,.
\end{equation}
In turn, this is possible if
\begin{equation}\label{ip_m_delta2}
   c(m) <  \frac{1}{1+\delta_2} %\frac{1}{(1+\delta_2)^2}
   \,.
 \end{equation}
We notice that inequality \eqref{ip_m_delta2} is certainly satisfied if $c(m)\le 1/3$ because $\delta_2<2$.
Therefore, we choose the parameters $m$, $\xi$ and $K$ as follows:
\begin{enumerate}
\item We determine the maximum size $m$ of the waves in the approximate solution by assuming \eqref{ip_m_delta2}; we recall that $c$ is a strictly increasing function of $m$ and then it is invertible.

\item We choose $\xi$  in the non-empty interval defined by \eqref{cond_xi_delta2} and then choose $K$ to satisfy  \eqref{cond_K_xi} with strict inequalities:
\begin{equation}\label{cond_K_xi-strict}
  \max\left\{\frac{\xi-1}{2}\,,\,{1}\right\}< K < \frac{\xi-1}{\delta_2}\,. %%%% \le deve cambiare in < per avere \mu<1 !!
\end{equation}
The strict inequality on the right of \eqref{cond_K_xi-strict} is needed both for the control on the number of interactions \cite[Lemma 6.2]{amadori-corli-siam} and for the decay of the reflected waves as the number of interactions increases, see \eqref{eq:mu} and Proposition \ref{prop:tilde-Fk}.
%and then $K$ in the non-empty intervals defined by \eqref{cond_xi_delta2}, \eqref{cond_K_xi}, respectively.
\item We choose $\knp$ so that \eqref{eq:knp} holds.
\end{enumerate}
We collect the results of the previous subsection into a single proposition.

\begin{proposition}[Local decreasing]\label{prop:last}
Consider the interaction of any two waves at time $t$. Let $m > 0$ be % either that $d(m)\le 1/3$ or
such that \eqref{ip_m_delta2} holds % (always true for $c(m)\le 1/3$)
and $C_o=C_o(m)$ as in Lemma \ref{lem:interazioni}.
If $\xi$, $K$, $\knp$ satisfy \eqref{cond_xi_delta2}, \eqref{cond_K_xi-strict} and \eqref{eq:knp}, respectively,
%\begin{equation}\label{cond_su_chi_xi}
%1+|\delta_2|<\xi<\frac{1}{\sqrt{d(m)}}\,,\quad
%\max\left\{1,\frac{\xi-1}{2}\right\}<K<\frac{\xi-1}{|\delta_2|}\,,\quad
%\knp<\frac{K}{C_o}\,,
%\end{equation}
then
\begin{equation}\label{eq:Fdecr}
\Delta F(t) \le 0\,.  %%% nota: vale < se K soddisfa <
\end{equation}
\end{proposition}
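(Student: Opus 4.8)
The plan is to recognize that Proposition~\ref{prop:last} is purely a matter of assembling the two preceding propositions and checking that the chosen parameters $m,\xi,K,\knp$ meet the hypotheses of each. At an interaction time $t$ exactly two fronts meet, and there are only two structurally distinct possibilities: either a $1$- or $3$-wave hits the vertical front carrying the composite $(2,0)$-wave, or two waves of families $1$ and $3$ interact away from $x=0$. The first case is governed by Proposition~\ref{Delta-F-2wave}, the second by Proposition~\ref{prop:DeltaF33}; no other configuration can occur, since the composite wave is the only $2$-type front and it is pinned at $x=0$. So the whole proof reduces to a short verification that \eqref{ip_m_delta2}, \eqref{cond_xi_delta2}, \eqref{cond_K_xi-strict} and \eqref{eq:knp} entail \eqref{Kappa-mu} and \eqref{eq:sogliazza}.

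For an interaction with the composite wave I would invoke Proposition~\ref{Delta-F-2wave}. When the Pseudo Accurate solver is used, the requirement is \eqref{Kappa-mu}, i.e.\ $K>\max\{(\xi-1)/2,1\}$, which is precisely the left strict inequality in \eqref{cond_K_xi-strict}. When the Pseudo Simplified solver is used, the requirement is exactly \eqref{eq:knp}, which is assumed outright. In either subcase Proposition~\ref{Delta-F-2wave} yields $\Delta F(t)<0$, hence a fortiori \eqref{eq:Fdecr}.

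For an interaction between a $1$- and a $3$-wave I would appeal to Proposition~\ref{prop:DeltaF33}, whose hypothesis is \eqref{eq:sogliazza} together with the size bound \eqref{rogna}. The upper bound $\xi\le 1/c(m)$ is the right-hand inequality of \eqref{cond_xi_delta2}; the lower bound $1<\xi$ follows from $1+\delta_2<\xi$ because $\delta_2>0$ in the case under consideration; and $K\le(\xi-1)/\delta_2$ is implied by the strict right-hand inequality of \eqref{cond_K_xi-strict}. The bound \eqref{rogna} holds at every interaction by the construction of the approximate solution, rarefactions being split so as to have size below the threshold and the a priori control on the total variation keeping all strengths at most $m$. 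Thus Proposition~\ref{prop:DeltaF33} gives $\Delta F(t)\le 0$.

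The only genuine content, which is the discussion preceding the statement, is the consistency of the parameter constraints: one must check that the admissible ranges are nonempty and nested in the right order. This is where \eqref{ip_m_delta2} enters, guaranteeing that the interval in \eqref{cond_xi_delta2} for $\xi$ is nonempty, hence that the window in \eqref{cond_K_xi-strict} for $K$ is nonempty, after which $\knp$ can be picked to satisfy \eqref{eq:knp}. Given that chain, the proof of Proposition~\ref{prop:last} itself is a one-line dispatch to Proposition~\ref{Delta-F-2wave} and Proposition~\ref{prop:DeltaF33}; I expect the only mild obstacle to be confirming that the case analysis is exhaustive, in particular that simultaneous or degenerate interactions are excluded by the genericity of the front speeds arranged in step \emph{(i)} of the scheme.
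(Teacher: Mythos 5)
Your proposal is correct and matches the paper exactly: the paper gives no separate proof of Proposition~\ref{prop:last}, which merely collects Propositions~\ref{Delta-F-2wave} and~\ref{prop:DeltaF33} under the parameter chain \eqref{ip_m_delta2} $\Rightarrow$ \eqref{cond_xi_delta2} nonempty $\Rightarrow$ \eqref{cond_K_xi-strict} nonempty $\Rightarrow$ \eqref{eq:knp}, worked out immediately before the statement. The one caveat is your justification of \eqref{rogna}: deriving it from ``the a priori control on the total variation'' is circular at this stage, since that control is obtained in Proposition~\ref{global} \emph{from} the present proposition; the paper instead treats \eqref{rogna} as an implicit hypothesis of the local statement and closes the bootstrap only in Proposition~\ref{global}.
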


Now, we prove the global decreasing of $F$.

\begin{proposition}[Global decreasing] \label{global}
We choose parameters $m$, $\xi$, $K$, $\knp$ as in Proposition \ref{prop:last}.
Moreover, we assume that
\begin{equation}\label{eq:boundL}
\bar{L}(0+) \le m\hspace{0.8pt} c^2(m)
\end{equation}
and that the approximate solution is defined in $[0,T]$.
Then we have that $F(t)\le m$ and $\Delta F(t)\le0$ for every $t\in(0,T]$.
\end{proposition}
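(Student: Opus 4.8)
The plan is to argue by induction on the (finitely many) interaction times of the approximate solution in $[0,T]$, exploiting that the bound $F\le m$ is \emph{self-improving}: as soon as it holds it forces every individual $1$- or $3$-front to have size at most $m$, so that the size restriction \eqref{rogna} is automatically in force and Proposition~\ref{prop:last} applies to yield $\Delta F\le 0$ at the next interaction. The elementary fact underlying this is $\bar L\le F$. Indeed, since $\xi\ge 1$ and $\knp>0$ we have $L\ge\sum_{i=1,3}|\gamma_i|=\bar L$, while $Q=\delta_2 V\ge 0$ (recall $\delta_2>0$) and $K\ge 1$, so $F=L+KQ\ge L\ge\bar L$. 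Consequently, if $F\le m$ just before an interaction, then $\bar L\le m$, and since $\bar L=\sum_{i=1,3}|\gamma_i|$ dominates the modulus of each single front, every interacting wave satisfies $|\delta_i|\le m$, i.e.\ \eqref{rogna} holds.

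First I would settle the base case $F(0+)\le m$. At $t=0+$ no non-physical wave has yet been generated, so the composite front reduces to the pure contact discontinuity and $\gamma_{2,0}(0+)=0$; hence the $\knp$-term drops and $L(0+)\le\xi\,\bar L(0+)$, while trivially $V(0+)\le\xi\,\bar L(0+)$. Therefore
\[
F(0+)=L(0+)+K\delta_2\,V(0+)\le \xi\,(1+K\delta_2)\,\bar L(0+).
\]
Now I would feed in the parameter constraints. From \eqref{cond_K_xi-strict} one gets $1+K\delta_2<\xi$, and from \eqref{cond_xi_delta2} one gets $\xi\le 1/c(m)$, i.e.\ $\xi\,c(m)\le 1$. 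Combining these,
\[
\xi\,(1+K\delta_2)\,c^2(m)<\xi^2 c^2(m)=\bigl(\xi\,c(m)\bigr)^2\le 1,
\]
so that the hypothesis \eqref{eq:boundL}, namely $\bar L(0+)\le m\,c^2(m)$, gives $F(0+)\le \xi(1+K\delta_2)\,m\,c^2(m)<m$. This is the single place where the precise shape $m\,c^2(m)$ of the threshold in \eqref{eq:boundL} is exploited, and it is the step I expect to demand the most care in bookkeeping the chain of parameter inequalities.

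With the base case in hand the induction is routine. On $[0,T]$ the approximate solution has finitely many interaction times $0<t_1<\dots<t_N\le T$, and $F$ is constant on each interval between consecutive interactions. Assuming inductively $F(t)\le m$ for $t<t_k$, the observation $\bar L\le F$ gives $\bar L(t_k^-)\le m$, hence \eqref{rogna} holds for the two waves meeting at $t_k$; Proposition~\ref{prop:last} then yields $\Delta F(t_k)\le 0$, so $F(t_k^+)\le F(t_k^-)\le m$ and the bound propagates past $t_k$. Iterating over $t_1,\dots,t_N$ proves both $F(t)\le m$ and $\Delta F(t)\le 0$ for every $t\in(0,T]$, which is the assertion. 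The genuine difficulty here is not computational but structural: Proposition~\ref{prop:last} requires \eqref{rogna}, which is in turn guaranteed only by the very bound $F\le m$ that we are trying to propagate; the inductive scheme is exactly what breaks this apparent circularity, by using the bound at earlier times to license the application of Proposition~\ref{prop:last} that secures it at later ones.
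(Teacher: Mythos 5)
Your proof is correct and takes essentially the same route as the paper's: the same chain of inequalities $F(0+)\le (1+K\delta_2)\,L(0+)\le \xi^2\,\bar L(0+)\le \bar L(0+)/c^2(m)\le m$ for the base case, and the same recursion in which the bound $F\le m$ at earlier times guarantees \eqref{rogna} at the next interaction so that Proposition~\ref{prop:last} yields $\Delta F\le 0$. The paper compresses the induction into the phrase ``by a recursion argument''; you have merely spelled it out.
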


\begin{proof}
By Propositions \ref{Delta-F-2wave} and \ref{prop:DeltaF33} we know that $\Delta F\le 0$ if \eqref{rogna} holds.
By \eqref{eq:boundL} we deduce that $L(0+) \le m$ and by a recursion argument we find that for every $t\le T$
\begin{equation*}
F(t)\le F(0+) \le L(0+)(1 + K\delta_2) \le \xi^2 \bar{L}(0+)\le\frac{1}{c^2(m)}\bar{L}(0+)\le m\,.
\end{equation*}
This implies $\bar{L}(t)\le L(t)\le m$ for every $t\le T$ and in particular \eqref{rogna}.
\end{proof}

\section{The convergence and consistency of the algorithm}\label{sec:Cauchy}

\setcounter{equation}{0}

In this section we finally conclude the proof of Theorem \ref{thm:main}, focusing on the convergence and consistency of the front tracking algorithm.

For the algorithm to be well-defined, one has to verify that the total number of wave fronts and interactions is finite,
besides the fact that the size of rarefaction waves remains small. We already anticipated in the introduction that the algorithm used here
to construct the approximate solutions offers the advantage of getting quickly a bound on the total number of wave fronts.
As a matter of fact, at every interaction producing more than two outgoing waves the interaction potential $F$ decreases by a fixed positive
amount; hence, as in \cite[Lemma $6.2$]{amadori-corli-siam} one can prove that for large times any interaction involves only two incoming and two outgoing fronts.
The other two requirements are accomplished as in \cite[Proposition $6.3$]{amadori-corli-siam} and \cite[Lemma $6.1$]{amadori-corli-siam}, respectively.

The convergence follows from a standard application of Helly's Theorem, while for the consistency we need refined estimates to control the total size of the composite wave.

\subsection{Control of the total size of the composite wave}

The wave-front tracking scheme exploits the notion of generation order of a wave to prove that the strength
of the composite wave tends to zero as the approximation parameter $\nu$ tends to infinity:
this means that the $(2,0)$-wave becomes an entropic $2$-wave in the limit. More specifically,
for a physical wave $\gamma$ of family $1$ or $3$ we define its generation order $k_\gamma$ as in \cite[\S 6.2]{amadori-corli-siam};
on the other hand, for the $(2,0)$-wave we proceed as follows. We assign order $1$ to the $(2,0)$-wave generated at $t=0+$;
then, we keep its order unchanged in the cases where the Pseudo Accurate solver is used, while we set it to be equal to
$k_{\gamma}+1$ when the Pseudo Simplified solver is used with a  physical wave $\gamma$.

For any $k=1,2,\ldots$, we define
\begin{align*}
  L_k  &=  \sum_{\gamma>0\atop k_\gamma = k}|\gamma| + \xi
  \sum_{\gamma<0\atop k_\gamma = k}|\gamma| +
  \knp \,L_k^{0}\,,%\sum_{\gamma \in {\cal N\!P}\atop k_\gamma = k}|\gamma|\,,
  \\
  V_k & =  \sum_{\gamma>0,\,\mathcal{A}\atop k_\gamma = k}|\gamma| +
  \xi\sum_{\gamma<0,\,\mathcal{A}\atop k_\gamma = k}|\gamma|\,,\qquad Q_k\ =\ \delta_2V_k\,,
  \\
  F_k &= L_k + K\, Q_k\,,
\end{align*}
where $\gamma$ ranges over the set of $1$- and $3$-waves, as for \eqref{L-xi}.
Above we denoted
\begin{equation}\label{Lk0}
L_k^{0}=\sum_{\tau_k < t}|\eps_{2,0}-\delta_{2,0}|(\tau_k)\,,
\end{equation}
with $\tau_k$ denoting the interaction times where the outgoing composite wave has order of generation $k$.
As a consequence, only the times $\tau_k$ where the Pseudo Simplified solver is used give positive summands in \eqref{Lk0}:
when the Pseudo Accurate solver is used we have $\eps_{2,0} = \delta_{2,0}$.

For $k\in\naturali$, we introduce:
\begin{itemize}
\item $I_k = $ set of times when two waves $\alpha$, $\beta$ of same family interact,
with $\max\{k_\alpha, k_\beta\}=k$;
\item $J_k = $ set of times when a $1$- or a $3$-wave of order $k$ interacts with the $(2,0)$-wave.
\end{itemize}
We set   ${\cal T}_k = I_k\cup J_k$ and define
\begin{align}\label{eq:mu}
  \mu &\doteq \max\left\{\frac{1}{2K-1},\frac{\xi}{2K+1},\frac{K\delta_2+1}{\xi},\frac{\knp C_o}{K}\right\}\,.
\end{align}
We notice that $0<\mu<1$ by \eqref{cond_K_xi-strict} and \eqref{eq:knp}.

\begin{proposition}\label{lem:01}
  Let $m$, $\xi$, $K$ and $K_{np}$ satisfy the assumptions of Proposition \ref{prop:last} and assume  that $F(t)<m$ for all $t$.
   Then the following holds, for $\tau\in {\cal T}_h$, $h\ge 1$:
\begin{align}\label{Fk-segni-1}
&\Delta F_{h}<0\,,\qquad \Delta F_{h+1}>0\,,\\ \label{Fk-segni-2}
&\Delta F_k=0\qquad\,\, \, \hbox{ if }\ k\ge h+2\,.
\end{align}
Moreover,
\begin{equation}
\label{h=k-1b-XXX}
[\Delta F_{h+1}]_+ \le \mu \Bigl([\Delta F_h]_-   -
\sum_{\ell=1}^{h-1} \Delta F_\ell \Bigr)\,.
\end{equation}
\end{proposition}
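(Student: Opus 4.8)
The plan is to reduce \eqref{h=k-1b-XXX} to a single scalar ratio and then match that ratio, case by case, against the four quantities defining $\mu$ in \eqref{eq:mu}. The first ingredient is the bookkeeping of generation orders at an interaction time $\tau\in\mathcal{T}_h$. In every admissible configuration the incoming fronts carry order at most $h$, the transmitted physical wave inherits the \emph{minimal} order of the incoming pair (hence $\le h$), and the interaction creates exactly one new front of order $h+1$: the reflected $1$- or $3$-wave in a same-family interaction ($\tau\in I_h$) or in a Pseudo Accurate step ($\tau\in J_h$), and the modified composite wave, whose increment $|\eps_{2,0}-\delta_{2,0}|$ enters $L^0_{h+1}$, in a Pseudo Simplified step ($\tau\in J_h$). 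Since $L^0$ is only augmented and never decreased, and the composite keeps its order in the Pseudo Accurate case, no front of order $\ge h+2$ is touched: this is \eqref{Fk-segni-2}. The sole order-$(h+1)$ contribution is strictly positive, giving $\Delta F_{h+1}>0$; and at order $h$ one only removes the approaching incoming wave of maximal order (possibly replacing it by a wave of strictly smaller $L$- and $V$-weight), so $\Delta F_h<0$ exactly as in the local estimates of Propositions \ref{Delta-F-2wave} and \ref{prop:DeltaF33}. This proves \eqref{Fk-segni-1}.

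Next I would use a telescoping identity that makes the lower-order redistribution irrelevant. Because $\Delta F_h<0$ and $\Delta F_k=0$ for $k\ge h+2$,
\begin{equation*}
[\Delta F_h]_- - \sum_{\ell=1}^{h-1}\Delta F_\ell \;=\; -\sum_{\ell=1}^{h}\Delta F_\ell \;=\; \Delta F_{h+1} - \Delta F\,,
\end{equation*}
so, writing $P=\Delta F_{h+1}=[\Delta F_{h+1}]_+>0$ and $D=-\Delta F\ge0$ (the total decrease, which is nonnegative by Proposition \ref{prop:last}), inequality \eqref{h=k-1b-XXX} is \emph{equivalent} to $P\le\mu\,(P+D)$, i.e.\ to the bound $P/(P+D)\le\mu$. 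The advantage is that both $P$ and $D$ can be read off from the \emph{total} increments already computed in Propositions \ref{Delta-F-2wave} and \ref{prop:DeltaF33}, so the individual values $\Delta F_{h'}$ at intermediate orders $h'<h$ never need to be evaluated.

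Then I would run the four cases, in each one using \eqref{rogna} to apply the interaction estimates. For a Pseudo Simplified step, $P=\knp|\eps_{2,0}-\delta_{2,0}|$ while $D\ge K\delta_2|\delta_i|$ because the approaching incoming wave leaves $\mathcal{A}$; \eqref{eq:stima-interazione-composta} then gives $P/(P+D)\le \knp C_o/K$. For a Pseudo Accurate step with $i=1$ the reflected $3$-wave recedes from $x=0$, so $P$ is just its $L$-weight and $P+D=K\delta_2|\delta_1|-|\eps_3|$ (up to the common weight $1$ or $\xi$, which cancels); bounding $|\eps_3|\le\frac12\delta_2|\delta_1|$ via \eqref{eq:33Pengnew} and using the monotonicity of the resulting ratio in the reflected strength maximizes it at $\frac{1}{2K-1}$. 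For $i=3$ the reflected $1$-wave is a shock precisely when the incoming $3$-wave is a rarefaction, and the same computation—now with weight $\xi$ on the reflected front and $|\eps_1|\le\frac12\delta_2|\delta_3|$—gives the maximal value $\frac{\xi}{2K+1}$ (the complementary sub-case is dominated by $\frac{1}{\xi(2K+1)}$). Finally, for a same-family interaction the total increments are those of Proposition \ref{prop:DeltaF33} and the reflected strength is controlled through \eqref{eq:chi_def} together with $\xi\,c(m)\le1$ from the left inequality in \eqref{eq:sogliazza}; the worst case is when both incoming fronts lie to the \emph{right} of $x=0$, so that the reflected front \emph{approaches} the interface and $P=(1+K\delta_2)$ times its $L$-weight, yielding exactly $\frac{K\delta_2+1}{\xi}$ (and one checks $D>0$ here using $K\delta_2<\xi-1$ from \eqref{cond_xi_delta2}). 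Taking the maximum over the four cases reproduces \eqref{eq:mu}.

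The main obstacle is this last case: one must track the approaching status of the newly created reflected front relative to the interface at $x=0$. For the two pseudo-solvers the reflected front is born at $x=0$ and immediately recedes, so it contributes nothing to $V$ and $P$ reduces to a pure $L$-weight; but a same-family interaction takes place on one side of $x=0$, and only when it occurs on the near side does the reflected front approach, injecting the extra factor $K\delta_2$ into $P$. Handling $P$ and $D=-\Delta F$ consistently in this approaching sub-case—while invoking the sharp reflection bound \eqref{eq:chi_def} rather than the weaker coefficient $d(\cdot)$ of Remark \ref{rem-d(m)}—is the delicate point; the remainder is the order bookkeeping and the elementary monotonicity used to locate the maxima.
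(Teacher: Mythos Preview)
Your argument is correct and follows a genuinely different route from the paper's. Two harmless slips: in the Pseudo Simplified step the bound should read $P+D\ge K\delta_2|\delta_i|$ rather than $D\ge K\delta_2|\delta_i|$ (your ratio bound $P/(P+D)\le K_{np}C_o/K$ is nonetheless right, since $P+D=-\Delta F_h$), and the inequality $K\delta_2<\xi-1$ comes from \eqref{cond_K_xi-strict}, not \eqref{cond_xi_delta2}.

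The paper, by contrast, keeps $L$ and $Q$ separate. For $\tau\in I_h$ it first proves the $L$-only damping
\[
[\Delta L_{h+1}]_+\le\frac{1}{\xi}\Bigl([\Delta L_h]_- -\sum_{\ell<h}\Delta L_\ell\Bigr)
\]
directly from \eqref{Delta_L_xi_13}--\eqref{Delta_L_xi_13-SR}, multiplies by $(1+K\delta_2)$, and then separately checks the sign condition $[\Delta Q_h]_- -\sum_{\ell<h}\Delta Q_\ell\ge 0$ to absorb the potential part. For $\tau\in J_h$ it verifies $[\Delta F_{h+1}]_+\le\mu[\Delta F_h]_-$ by the same case list you run. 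Your telescoping identity $[\Delta F_h]_- -\sum_{\ell<h}\Delta F_\ell = \Delta F_{h+1}-\Delta F$ is the real difference: it collapses all lower-order bookkeeping into the already-known total decrease $-\Delta F$, so you never need to track $\Delta L_\ell$ and $\Delta Q_\ell$ at intermediate orders nor prove the $Q$-sign lemma. The paper's decomposition, on the other hand, isolates the $L$-damping as a statement of independent interest and makes it transparent that only the same-family case injects the factor $K\delta_2+1$ into $\mu$.
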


\begin{remark}\label{ref:DeltaF<0} \rm Notice that Proposition~\ref{lem:01} let us improve Proposition~\ref{prop:last}. Indeed, recalling that ${\cal T}_h= I_{h}\cup J_h$, Proposition \ref{lem:01} implies, for $\tau \in I_{h}$,
\begin{equation*}%\label{Delta_W_piu_V}
\Delta F = \sum_{\ell=1}^{h-1}\Delta F_{\ell} \,-\, [\Delta F_h]_- \,+\, [\Delta F_{h+1}]_+  \le -(1-\mu)[\Delta F_h]_- < 0 \,,
\end{equation*}
while for $\tau \in J_{h}$, being  $\sum_{\ell=1}^{h-1} [\Delta F_\ell]_+=0$, it gives
\begin{equation*}%\label{Delta_W_piu_V}
\Delta F  \,=\, -  [\Delta F_h]_- \,+\, [\Delta F_{h+1}]_+ \le  -(1-\mu)[\Delta F_h]_- <0 \,.
\end{equation*}
Then, estimate \eqref{h=k-1b-XXX} quantifies the decrease in the functional $F$ and thus improves \eqref{eq:Fdecr}.
\end{remark}

\begin{proofof}{Proposition \ref{lem:01}} If $k\ge h+2$, no wave of order $k$ is involved and then \eqref{Fk-segni-2} holds\,.
To prove \eqref{Fk-segni-1} and \eqref{h=k-1b-XXX}, we distinguish between two cases.

\paragraph{\fbox{$\tau\in I_{h}$}} (Interactions between waves of $1$-, $3$-family).

Clearly the $F_k$'s do not vary when a $1$-wave interacts with a $3$-wave.
Then we consider interactions of waves of the same family, see Figure \ref{fig:inter_k-1}{\em (a)}.

Since $\tau\in I_{h}$, then $\Delta L_{h+1}>0$ and $0\le \Delta Q_{h+1}\le \delta_2\Delta L_{h+1}$.
Also, $\Delta F_{h}=\Delta L_{h} + K \Delta Q_{h}<0$, since both terms in the sum are negative or zero.
This proves \eqref{Fk-segni-1}.

By \eqref{Delta_L_xi_13} and \eqref{Delta_L_xi_13-SR} (see also \cite[(6.10)]{amadori-corli-siam}), we have that
\begin{equation}\label{0k-1}
[\Delta  L_{h+1}]_+ \, \le\,  \frac{1}{\xi} \Bigl([\Delta L_{h}]_- - \sum_{\ell = 1}^{h-1}\,\Delta L_\ell \Bigr).
\end{equation}
By (\ref{0k-1}), the estimate $0\le \Delta Q_{h+1}\le \delta_2\Delta L_{h+1}$ and \eqref{eq:mu} we deduce that
\begin{equation}
0<\Delta F_{h+1} \le (1+K\delta_2) [\Delta L_{h+1}]_+ \le \mu \biggl( [\Delta L_{h}]_- - \sum_{\ell = 1}^{h-1}\,\Delta L_\ell\biggr)\,.
\label{eq:ll}
\end{equation}
We now prove that
\begin{equation}
  [\Delta Q_{h}]_{-} - \sum_{\ell = 1}^{h-1}\Delta Q_\ell \ge 0\,,
\label{eq:mm}
\end{equation}
for which we only have to consider the case when $\Delta Q_\ell>0$ for an $\ell\le h-1$. In this case, $[\Delta Q_{h}]_- - \sum_{\ell = 1}^{h-1}\Delta Q_\ell
= -\delta_2\,\Delta V\ge \delta_2(-\Delta L + |\eps_1|)\ge 0$ because of \eqref{Delta_L_xi_13}, \eqref{Delta_L_xi_13-SR};
this proves \eqref{eq:mm}.
Therefore, for $\tau\in I_{h}$, estimate \eqref{h=k-1b-XXX} follows from (\ref{eq:ll}) and (\ref{eq:mm}).

%%%%%%%%%%%%%%%%%%%%%%%% inter31 %%%%%%%%%%%%%%%%%%%%%%%%%%%%%%
\begin{figure}[htbp]
\begin{picture}(100,80)(-100,-20)
\setlength{\unitlength}{0.7pt}

\put(60,0){
\put(0,40){\line(-1,-1){45}}
\put(-50,-15){\makebox(0,0){$\alpha_{h}$}}
\put(0,40){\line(-1,-3){15}}
\put(-15,-15){\makebox(0,0){$\beta_{\ell}$}}
\put(0,40){\line(1,2){20}}
\put(35,78){\makebox(0,0){$\eps_{3,\ell}$}}
\put(0,40){\line(-1,1){40}}
\put(-65,78){\makebox(0,0){$\eps_{1,h+1}$}}
\put(0,-35){\makebox(0,0){$(a)$}}
}

\put(320,0){
\put(0,40){\line(0,-1){50}}
\put(-25,-5){\makebox(0,0){$\delta_{(2,0),\ell}$}}
\put(0,40){\line(0,1){45}}
\put(-22,80){\makebox(0,0){$\delta_{(2,0),\ell}$}}
\put(0,40){\line(1,-2){25}}
\put(40,-10){\makebox(0,0){$\delta_{1,h}$}}
\put(0,40){\line(2,3){30}}
\put(55,78){\makebox(0,0){$\eps_{3,h+1}$}}
\put(0,40){\line(-2,1){60}}
\put(-65,78){\makebox(0,0){$\eps_{1,h}$}}
\put(0,-35){\makebox(0,0){$(b)$}}
}

\end{picture}

\caption{\label{fig:inter_k-1}{Interactions of waves; $h$ and $\ell$ denote generation orders. {\em (a)}: interaction of $3$-waves with $h\ge\ell$; {\em (b)}: interaction between a $1$-wave and the $(2,0)$-wave solved by the Pseudo Accurate solver.}}
\end{figure}
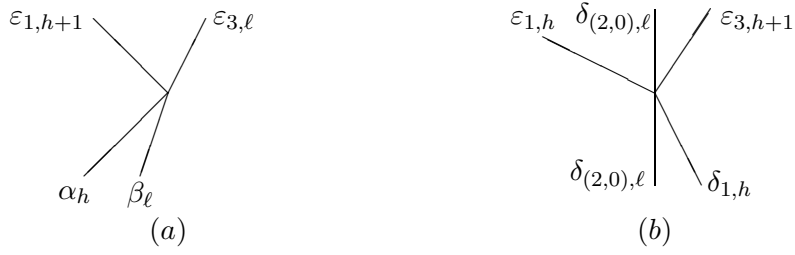
%%%%%%%%%%%%%%%%%%%%%%%%%%%%%%%%%%%%%%%%%%%%%%%%%%%%%%%%%%%%%%%

\paragraph{\fbox{$\tau\in J_{h}$}} (Interactions with the $(2,0)$-wave).

Since no wave of order $\le h-1$ interact, then (\ref{h=k-1b-XXX}) reduces to
\begin{equation}\label{h=k-1b-XXX-caso-J}
[\Delta F_{h+1}]_+ \le \mu [\Delta F_{h}]_- \,.
\end{equation}

To prove \eqref{h=k-1b-XXX-caso-J}, we first consider the case where the Pseudo Accurate solver is used, see Figure \ref{fig:inter_k-1}{\em (b)}.
Assume that a $1$-wave $\delta_1$ of order $h$ interacts with the $(2,0)$-wave. By \eqref{eq:patterns}, the reflected wave $\eps_3$ is of the
same type of the interacting wave and the transmitted one $\eps_1$. If $\delta_1>0$, then $\eps_1>0$ and $\eps_3>0$;
by Lemma \ref{lem:interazioni} this leads to
\begin{equation*}
\Delta F_{h} =\Delta L_{h}+K\Delta Q_{h}\le \frac{\delta_2|\delta_1|}{2}-K\delta_2|\delta_1|=-(2K-1) \frac{\delta_2|\delta_1|}{2}<0
\end{equation*}
by \eqref{cond_K_xi} and then, because of \eqref{eq:mu}, to
\begin{equation*}
[\Delta F_{h+1}]_+=\Delta L_{h+1} =|\eps_3|\le \frac{\delta_2|\delta_1|}{2}\le \frac{1}{2K-1}[\Delta F_{h}]_-\le \mu[\Delta F_{h}]_-\,.
\end{equation*}
The last estimate is also valid when $\delta_1<0$
(the only difference is that in the previous computations there is a factor $\xi$ both in $\Delta F_{h}$ and in $\Delta F_{h+1}$).

On the other hand, if we consider the interaction with a wave $\delta_3$ of order $h$ belonging to the third family, then the reflected
wave $\eps_1$ will be of a type different from that of $\delta_3$ and $\eps_3$. In this case, we first suppose $\delta_3,\eps_3>0$; then, $\eps_1<0$. As a consequence we have
\begin{equation*}
\Delta F_{h}=-|\eps_1|-K\delta_2|\delta_3|\le -(1+2K)|\eps_1|
\end{equation*}
and, therefore,
\begin{equation*}
[\Delta F_{h+1}]_+=\xi|\eps_1|=\frac{\xi}{1+2K}\left[(1+2K)|\eps_1|\right]\leq\frac{\xi}{1+2K}[\Delta F_{h}]_-\le \mu[\Delta F_{h}]_-\,,
\end{equation*}
because of \eqref{eq:mu}. In the other case, i.e. when $\delta_3,\eps_3<0$ and $\eps_1>0$, we have
\begin{equation*}
\Delta F_{h}  % =\xi(|\eps_3|-|\delta_3|)-K\xi\delta_2|\delta_3|
=-\xi|\eps_1| -K\xi \delta_2|\delta_3|%\leq-\xi|\eps_{1}|
\leq-\xi(1+2K)|\eps_1|
\end{equation*}
and
\begin{equation*}
[\Delta F_{h+1}]_+= %\Delta F_{h+1}=
|\eps_1| % =\frac{1}{\xi}(\xi|\eps_{1}|)\le \frac{1}{\xi}[\Delta F_{k-1}]_{-}
\le\frac{1}{\xi(1+2K)}[\Delta F_{h}]_{-}\le \mu[\Delta F_{h}]_-\,.
\end{equation*}

Now, we consider the case when the interacting wave has strength $|\delta|<\rho$ and then the Pseudo Simplified solver is used.
In this case a non-physical error of size $|\eps_{2,0}-\delta_{2,0}|$ and order $h+1$ appears. Thus, again by Lemma \ref{lem:interazioni},
\begin{equation*}
0< \Delta F_{h+1} = \knp \Delta L_{h+1} ^0 \le \knp C_o \delta_2 |\delta|,\qquad
\Delta L_{h}=0\,,\qquad \Delta Q_{h} \le  - \delta_2|\delta|\,.
\end{equation*}
Consequently, $[\Delta F_{h}]_- \ge K\delta_2|\delta|$ and
\begin{equation*}
[\Delta F_{h+1} ]_+  \le \frac{\knp C_o}{K}[\Delta F_{k-1}]_-\le \mu[\Delta F_{k-1}]_- \,.
\end{equation*}
Then (\ref{h=k-1b-XXX-caso-J}) is proved.
Finally we notice that, in all the above cases for $\tau\in J_{h}$, \eqref{Fk-segni-1} holds.
%By (\ref{h=k-1b-XXX-caso-J}) we get (\ref{h=k-1b-XXX}), since no wave of order $\le k-2$ interacts.
\end{proofof}

%\newpage
%Because of Proposition~\ref{lem:01}, we can conclude that for any $t\ge0$ and $k\ge 1$
%\begin{align}
%  \tilde L_k(t)\le \tilde F_k(t) & \le  \mu^{k-1}\cdot L(0) \cdot\left( 1
 %   + K \delta_2\right) \label{piu-bella}
%\end{align}
%in the same way as in \cite[Prop. 6.7]{amadori-corli-siam}.
Now, we proceed similarly as in \cite[Proposition 6.7]{amadori-corli-siam} to obtain a recursive estimate for $F_k$.
Indeed, the functional $F_k$ increases at times $\tau \in {\cal T}_{k-1}$, it decreases at $\tau \in {\cal T}_k$,
while it has not a definite sign for times $\tau\in {\cal T}_h$ with $h\ge k+1$.
For $F_1$ we have:
\begin{equation}\label{F1-0}
F_1(t)= F_1(0) - \sum_{{\cal T}_1} [\Delta F_1]_- + \sum_{h>1} \sum_{{\cal T}_h}  \Delta F_1\,,
\end{equation}
while for $F_k$ with $k\ge 2$ we use that $F_k(0)=0$ % and \eqref{h=k-1b-XXX} (without the last term because of $k=2$)
to obtain
\begin{equation}\label{Fk-0}
F_k(t)= \sum_{{\cal T}_{k-1}} [\Delta F_k]_+  -  \sum_{{\cal T}_k} [\Delta F_k]_- + \sum_{h>k} \sum_{{\cal T}_h}  \Delta F_k \,.
%\left([\Delta F_k]_+ - [\Delta F_k]_- \right)
\end{equation}
Here above we assumed that summations are done over interaction times $\tau<t$; the same notation is used in the following.
We consider now the last terms in \eqref{F1-0}, \eqref{Fk-0}:
\begin{equation*}
 \sum_{h>k} \sum_{{\cal T}_h}  \Delta F_k\,,\qquad k\ge 1\,.
\end{equation*}
The above contribution is different from zero (and then possibly positive) only if the interaction involves two waves of the same family,
one of order $k$ and the other of order $h$, with $h>k$. We denote by ${\cal T}_{h,k}$ the set of times at which an interaction of this type occurs.
Clearly ${\cal T}_{h,k}\subset {\cal T}_h$.

Moreover, we define the quantity
\begin{equation}\label{alpha-k}
\alpha_k(t)=  \sum_{\tau \in {\cal T}_{k-1},\tau<t} [\Delta F_k(\tau)]_+ \,,\qquad k\ge 2\,,
\end{equation}
that is, the first term on the right hand side of \eqref{Fk-0}.
Hence we rewrite \eqref{F1-0},  \eqref{Fk-0} as
\begin{align}\label{F1}
0\le F_1(t)&= F_1(0) - \sum_{{\cal T}_1} [\Delta F_1]_- + \sum_{ h> 1} \sum_{{\cal T}_{h,1}}  \Delta F_1\,,\\
\label{Fk}
0\le F_k(t)&= \alpha_k -  \sum_{{\cal T}_k} [\Delta F_k]_- + \sum_{h> k} \sum_{{\cal T}_{h,k}}  \Delta F_k\,, \qquad k\ge 2\,.
\end{align}

\begin{proposition}\label{prop:alpha} For $k\ge2$ one has
\begin{equation}\label{alpha-k-estimate}
\alpha_k \le \mu^{k-1} F_1(0) + \sum_{h\ge k }  \sum_{\ell=1}^{k-1} \sum_{{\cal T}_{h,\ell}}  \Delta F_\ell \,.
\end{equation}
\end{proposition}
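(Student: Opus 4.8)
The plan is to prove \eqref{alpha-k-estimate} by induction on $k$, feeding the pointwise recursion \eqref{h=k-1b-XXX} into the balance identities \eqref{F1}, \eqref{Fk} and then invoking the inductive hypothesis; the whole difficulty will be the bookkeeping of the nested sums rather than any new estimate. Since by \eqref{alpha-k} the quantity $\alpha_k$ sums $[\Delta F_k]_+$ over $\tau\in{\cal T}_{k-1}$, I first apply \eqref{h=k-1b-XXX} with $h=k-1$ at each such time and sum, which gives
\begin{equation*}
\alpha_k \;\le\; \mu\sum_{\tau\in{\cal T}_{k-1}}[\Delta F_{k-1}]_-\;-\;\mu\sum_{\tau\in{\cal T}_{k-1}}\sum_{\ell=1}^{k-2}\Delta F_\ell\,.
\end{equation*}

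For the base case $k=2$ the second sum is empty, and \eqref{F1} yields $\sum_{{\cal T}_1}[\Delta F_1]_-\le F_1(0)+\sum_{h\ge2}\sum_{{\cal T}_{h,1}}\Delta F_1$ after discarding the nonnegative $F_1(t)$; multiplying by $\mu$ and then replacing $\mu$ by $1$ in the source term delivers the claim. For the inductive step $k\ge3$, I bound the first sum through \eqref{Fk} at index $k-1$, namely $\sum_{{\cal T}_{k-1}}[\Delta F_{k-1}]_-\le\alpha_{k-1}+\sum_{h\ge k}\sum_{{\cal T}_{h,k-1}}\Delta F_{k-1}$ (again using $F_{k-1}(t)\ge0$), and then insert the inductive estimate for $\alpha_{k-1}$, whose source sum I split at its lowest index $h=k-1$.

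The decisive observation is structural: at a time $\tau\in{\cal T}_{k-1}$ at most one order $\ell<k-1$ can change, precisely the order of the lower incoming wave, so that such a $\tau$ lies in ${\cal T}_{k-1,\ell}$; hence $\sum_{{\cal T}_{k-1}}\sum_{\ell=1}^{k-2}\Delta F_\ell=\sum_{\ell=1}^{k-2}\sum_{{\cal T}_{k-1,\ell}}\Delta F_\ell$. This expression cancels exactly the $h=k-1$ slice of the source sum carried in by $\mu\,\alpha_{k-1}$, and after collecting the surviving pieces (including $\mu\sum_{h\ge k}\sum_{{\cal T}_{h,k-1}}\Delta F_{k-1}$ as the $\ell=k-1$ term) everything collapses to $\alpha_k\le\mu^{k-1}F_1(0)+\mu\sum_{h\ge k}\sum_{\ell=1}^{k-1}\sum_{{\cal T}_{h,\ell}}\Delta F_\ell$. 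The final step in both cases is to discard the prefactor $\mu$: this is legitimate because $0<\mu<1$ by \eqref{eq:mu} and because each source block $\sum_{{\cal T}_{h,\ell}}\Delta F_\ell$ with $h>\ell$ is nonnegative — these are exactly the contributions flagged as \emph{possibly positive} after \eqref{Fk}, reflecting that a low-order wave can only be reinforced when it absorbs a higher-order same-family wave.

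I expect the main obstacle to be the termwise cancellation between $-\mu\sum_{{\cal T}_{k-1}}\sum_{\ell\le k-2}\Delta F_\ell$ and the $h=k-1$ portion of the inductive source sum: making it match requires the precise claim that, at an order-$(k-1)$ interaction, lower orders are modified only through ${\cal T}_{k-1,\ell}$ events, so that the two index ranges coincide. Once this identity and the sign of the source blocks are in place, the rest is a routine reorganization of the triple sums.
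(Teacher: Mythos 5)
Your argument is essentially the paper's own proof, with the induction run from $k-1$ to $k$ rather than from $k$ to $k+1$: the same application of \eqref{h=k-1b-XXX} at times in ${\cal T}_{k-1}$, the same use of $F_1(t)\ge 0$ and $F_{k-1}(t)\ge 0$ in \eqref{F1} and \eqref{Fk} to bound $\sum [\Delta F_{k-1}]_-$, the same exact cancellation of the $h=k-1$ slice of the inductive source sum against $-\mu\sum_{{\cal T}_{k-1}}\sum_{\ell\le k-2}\Delta F_\ell$, and the same final discarding of the prefactor $\mu$, which the paper likewise justifies only by $\mu<1$. There is no substantive difference to report.
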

\begin{proof} For $k=2$, we use \eqref{h=k-1b-XXX} and the positivity of $F_1$ to get
\begin{align*}
\alpha_2 &=  \sum_{{\cal T}_{1}} [\Delta F_2]_+ \,\le\, \mu \,\sum_{{\cal T}_{1}} [\Delta F_1]_-
\,\le\, \mu\left\{  F_1(0)  +   \sum_{h> 1} \sum_{{\cal T}_{h,1}} \Delta F_1 \right\}
\\&\le
 \mu F_1(0) +   \sum_{h\ge 2 } \sum_{{\cal T}_{h,1}} \Delta F_1\,,
\end{align*}
which is \eqref{alpha-k-estimate} for $k=2$.

By induction, assume that \eqref{alpha-k-estimate} holds for some $k\ge2$. Since $F_k\ge 0$, from \eqref{Fk} we get
\begin{equation*}
\sum_{{\cal T}_k} [\Delta F_k]_- \le  \alpha_k  \,+\, \sum_{h> k} \sum_{{\cal T}_{h,k}}  \Delta F_k\,.
\end{equation*}
Now, by definition \eqref{alpha-k}, by estimate \eqref{h=k-1b-XXX} and the previous inequality we find
\begin{align*}
\alpha_{k+1}= \sum_{{\cal T}_{k}} [\Delta F_{k+1}]_+ &\le \mu \sum_{{\cal T}_{k}} [\Delta F_{k}]_-
\,-\, \mu \sum_{\ell<k}\sum_{{\cal T}_{k,\ell}} \Delta F_\ell\\
&\le \mu \alpha_k \,+\, \mu  \sum_{h> k} \sum_{{\cal T}_{h,k}}  \Delta F_k \,-\, \mu \sum_{\ell<k}\sum_{{\cal T}_{k,\ell}} \Delta F_\ell\,.
\end{align*}
By using the induction hypothesis \eqref{alpha-k-estimate}, we get
\begin{equation*}
\alpha_{k+1} \le \mu^{k} F_1(0) + \mu \underbrace{\sum_{h,\ell \atop h\ge k>\ell } \sum_{{\cal T}_{h,\ell}}  \Delta F_\ell}_{(I)} %\\&&
\,+\, \mu  \sum_{h> k} \sum_{{\cal T}_{h,k}}  \Delta F_k \,-\, \mu \underbrace{\sum_{\ell<k}\sum_{{\cal T}_{k,\ell}} \Delta F_\ell}_{(I\!I)}\,.
\end{equation*}
Notice that
\begin{equation*}
(I)= (I\!I) + %\left\{ \sum_{k>\ell } \sum_{{\cal T}_{k,\ell}} +
\sum_{h,\ell \atop h> k>\ell } \sum_{{\cal T}_{h,\ell}}  %\right\}
 \Delta F_\ell\,,
\end{equation*}
so that
\begin{align*}
\alpha_{k+1} &\le \mu^{k} F_1(0) + \mu \sum_{h,\ell \atop h> k>\ell } \sum_{{\cal T}_{h,\ell}}  \Delta F_\ell\,+\,
 \mu  \sum_{h> k} \sum_{{\cal T}_{h,k}}  \Delta F_k \\
&= \mu^{k} F_1(0) + \mu \sum_{h,\ell \atop h> k\ge \ell } \sum_{{\cal T}_{h,\ell}}  \Delta F_\ell
\end{align*}
from which we deduce \eqref{alpha-k-estimate} for $k+1$, since $\mu<1$.
\end{proof}

\begin{proposition}\label{prop:tilde-Fk} For $k\ge2$ one has
\begin{equation}\label{estimate-tildeFj}
\tilde F_k(t) \ \dot =\  \sum_{j\ge k} F_j(t) \le {\mu^{k-1}} F_1(0)\,.
\end{equation}
\end{proposition}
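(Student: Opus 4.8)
The plan is to start from the exact balance identities \eqref{Fk} for the single-order functionals $F_j(t)$, $j\ge k\ge 2$, to sum them over $j\ge k$, and then to show that everything apart from the geometric term $\mu^{k-1}F_1(0)$ either recombines or carries a favourable sign. Summing \eqref{Fk} gives the identity
\[
\tilde F_k(t)=\sum_{j\ge k}\alpha_j-\sum_{j\ge k}\sum_{{\cal T}_j}[\Delta F_j]_-+\sum_{j\ge k}\sum_{h>j}\sum_{{\cal T}_{h,j}}\Delta F_j\,,
\]
whose three groups of terms are, respectively, the inflow into orders $\ge k$ coming from the previous order, the genuine decrease at the ``top'' interactions, and the redistribution onto already present lower-order waves at cross interactions.

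First I would control the inflow $\sum_{j\ge k}\alpha_j$. I split off $\alpha_k$ and write the rest as $\sum_{j\ge k}\alpha_{j+1}$. For the latter I use the estimate underlying Proposition~\ref{prop:alpha}: summing \eqref{h=k-1b-XXX} (with $h=j$) over $\tau\in{\cal T}_j$ and recalling definition \eqref{alpha-k} yields $\alpha_{j+1}\le \mu\sum_{{\cal T}_j}[\Delta F_j]_- -\mu\sum_{\ell<j}\sum_{{\cal T}_{j,\ell}}\Delta F_\ell$, while $\alpha_k$ is bounded directly by \eqref{alpha-k-estimate}. Substituting these into the identity above, the $\alpha$-contribution produces the leading term $\mu^{k-1}F_1(0)$, a factor $-(1-\mu)\sum_{j\ge k}\sum_{{\cal T}_j}[\Delta F_j]_-$, and two triple sums of cross-interaction terms of type $\Delta F_\ell$.

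The core of the argument is then a matching of these cross sums. The sum surviving from the identity ranges over pairs (lower order $j\ge k$, higher order $h>j$), whereas the one from the $\alpha_{j+1}$-estimate ranges over pairs (higher order $j\ge k$, lower order $\ell<j$). The latter splits into the part with $\ell\ge k$, whose index set coincides exactly with the former, and the part with $\ell<k$, which is precisely the cross term already appearing in the bound \eqref{alpha-k-estimate} for $\alpha_k$. After relabelling, all the cross terms recombine with the common factor $(1-\mu)$ into a single sum over all pairs of orders, and the whole expression collapses to
\[
\tilde F_k(t)\le \mu^{k-1}F_1(0)+(1-\mu)\Bigl(\sum_{j\ge k}\sum_{\ell<j}\sum_{{\cal T}_{j,\ell}}\Delta F_\ell-\sum_{j\ge k}\sum_{{\cal T}_j}[\Delta F_j]_-\Bigr)\,.
\]

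It remains to show that the bracket is nonpositive, which is where the real content lies. At any fixed interaction time $\tau\in{\cal T}_j$ the mere admissibility of \eqref{h=k-1b-XXX}, i.e.\ the fact that its (nonnegative) left-hand side forces its right-hand side to be nonnegative, gives exactly $\sum_{\ell<j}\Delta F_\ell(\tau)\le[\Delta F_j(\tau)]_-$. Summing this pointwise inequality over $\tau\in{\cal T}_j$ and then over $j\ge k$ bounds the first triple sum by the decrease sum, so the bracket is $\le 0$; since $0<\mu<1$ by \eqref{eq:mu}, the estimate \eqref{estimate-tildeFj} follows. I expect the main obstacle to be purely combinatorial bookkeeping: keeping the index ranges of the two cross-interaction sums straight and checking that the $\ell\ge k$ part of one is literally the redistribution term of the other, so that no uncontrolled positive contribution survives the recombination.
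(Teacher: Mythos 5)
Your argument is correct, and it reaches \eqref{estimate-tildeFj} by a genuinely different decomposition than the paper's. The paper never sums the balance identities \eqref{Fk} over $j\ge k$: it writes $\tilde F_k(t)=\alpha_k+\sum_{h\ge k}\sum_{{\cal T}_h}\Delta\tilde F_k$, observes via Remark~\ref{ref:DeltaF<0} (i.e.\ the global decrease $\Delta F(\tau)\le 0$) that $\Delta\tilde F_k(\tau)\le-\sum_{\ell<k}\Delta F_\ell(\tau)$ at every $\tau\in{\cal T}_h$ with $h\ge k$, and then the resulting sum $-\sum_{h\ge k}\sum_{\ell<k}\sum_{{\cal T}_{h,\ell}}\Delta F_\ell$ cancels \emph{exactly} against the cross term in the bound \eqref{alpha-k-estimate} for $\alpha_k$ — a three-line telescoping that needs Proposition~\ref{prop:alpha} only for the single index $k$. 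You instead bound the whole inflow $\sum_{j\ge k}\alpha_j$, which requires the recursive estimate on every $\alpha_{j+1}$, $j\ge k$, and then a careful matching of three families of cross sums; your index bookkeeping is right (the $\ell\ge k$ part of your sum $B$ relabels to the redistribution term $C$ of the identity, and the $\ell<k$ part is the cross term of \eqref{alpha-k-estimate}, so everything recombines with factor $1-\mu$), and your closing pointwise inequality $\sum_{\ell<j}\Delta F_\ell(\tau)\le[\Delta F_j(\tau)]_-$ is a legitimate consequence of the nonnegativity of the right-hand side of \eqref{h=k-1b-XXX} (it is in fact equivalent to $\Delta F(\tau)\le[\Delta F_{j+1}(\tau)]_+$, weaker than what Remark~\ref{ref:DeltaF<0} gives). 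What the paper's route buys is brevity and the fact that only $\Delta F\le 0$ and the single bound on $\alpha_k$ are needed; what yours buys is that it makes explicit, with the quantitative factor $(1-\mu)$, how much of the top-order dissipation is actually consumed, at the price of heavier combinatorics.
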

\begin{proof}
For $k\ge 2$ we have $\tilde F_k(0)=0$. Moreover, we also deduce:
\begin{itemize}
\item $\Delta \tilde F_k(\tau)=0$ for $\tau\in {\cal T}_h$, $h\le k-2$, by \eqref{Fk-segni-2};
\item $\Delta \tilde F_k(\tau)= \Delta F_k(\tau) >0$ for $\tau\in {\cal T}_{k-1}$, by \eqref{Fk-segni-1};
\item at last, for all $\tau\in {\cal T}_h$, $h\ge k$,
\begin{equation*}
\Delta \tilde F_k(\tau) \le - \sum_{\ell=1}^{k-1} \Delta F_\ell(\tau)\,,
\end{equation*}
by the property $\Delta F(\tau)<0$, see Remark \ref{ref:DeltaF<0}.
\end{itemize}
As a consequence of the above properties, using also \eqref{alpha-k} and \eqref{alpha-k-estimate}, we find
\begin{align*}
\tilde F_k(t) &= \alpha_k + \sum_{h\ge k}  \sum_{{\cal T}_{h}}  \Delta \tilde F_k \\
&\le  \mu^{k-1} F_1(0) + \sum_{h\ge k }  \sum_{\ell=1}^{k-1} \sum_{{\cal T}_{h,\ell}}  \Delta F_\ell -
 \sum_{h\ge k}  \sum_{\ell=1}^{k-1}  \sum_{{\cal T}_{h,\ell}} \Delta F_\ell \,=\,  \mu^{k-1} F_1(0)\,.
\end{align*}
\end{proof}

%\begin{remark}\rm
We can now proceed to determine parameters $\rho$ and $\eta$ as in \cite{amadori-corli-siam}.
Fix $\eta>0$ such that $\eta=\eta_\nu\rightarrow 0$ as $\nu\rightarrow \infty$ and estimate the total number of waves of order $<k$.
Then, for the strength of the composite wave it holds
\begin{align*}
|\gamma_{2,0}|(t)&\le \tilde{L}_k(t)+\sum_{h<k \atop\tau_h<t}|\eps_{2,0}-\delta_{2,0}|(\tau_h)\le \\&\le   \mu^{k-1}\cdot L(0) \cdot\left( 1
    + K \delta_2\right) + C_o\rho\, \delta_2\, [\text{number of fronts of order $<k$}]<\frac{1}{\nu}\,,
\end{align*}
by choosing $k$ sufficiently large to have the first term $\le 1/(2\nu)$ and, then, $\rho=\rho_{\nu}$ small enough to have
the second term also $\le 1/(2\nu)$.
%\end{remark}

\begin{remark}\rm\label{rem:65}
Proposition \ref{prop:alpha} improves Lemma 6.6 in \cite{amadori-corli-siam}, because of $\Delta F_\ell$ on the right hand side of
\eqref{alpha-k-estimate} in place of $[\Delta F_\ell]_+$.  This is obtained under the same local interaction estimates \eqref{Fk-segni-1}--\eqref{h=k-1b-XXX}. Moreover, Proposition~\ref{prop:tilde-Fk} is only based on
 Proposition \ref{prop:alpha} and on $\Delta F<0$. Hence the same argument could be applied to the general case treated in  \cite{amadori-corli-siam}, and improve the related result by avoiding some technical assumptions due to the presence of non-physical waves.
\end{remark}

%\begin{remark}\rm
%%From the above proof we see that $\Delta L\le 0$ for $\xi=1$.
%%This was a key point in \cite{Nishida68}, where however a
%%different choice of strengths was done. In \cite{AmadoriGuerra01}
%%the inequality $\Delta L\le 0$ was proved to hold also for
%%$1<\xi\leq \xi_o$, for some $\xi_o>1$; the condition
%%(\ref{eq:sogliazza})${}_1$ gives an estimate of such a threshold.
%
%%More precisely
%  Recall that, in the first two cases of Proposition
%  \ref{prop:DeltaF33} we have $\Delta L\le 0$ for every
%  $\xi\ge1$. The third case is analyzed in detail in Lemma
%  \ref{lem:second}; we prove there that $\Delta L\le 0$ for any
%  $\xi > 1$ if $c(m) \le 1/2$, while we need $1<\xi\le
%  \frac{1}{2c(m)-1}$ if $c(m)>1/2$.
%\end{remark}
%%%%%%%%%%%%%%%%%%%%%%%%%%%%%%%%%%%

%\newpage
\subsection{Proof of Theorem \ref{thm:main} and a comparison}\label{subsec:proof}

In this last section we accomplish the proof of Theorem \ref{thm:main} and compare the result we obtain with that proved in \cite{amadori-corli-siam,amadori-corli-source}.

\begin{proofof}{Theorem \ref{thm:main}} It only remains to reinterpret the choice of the parameter $m$
in terms of the assumption~\eqref{hyp2} on the initial data.
Recalling Proposition~\ref{global}, \eqref{ip_m_delta2} and since
\begin{equation*}
\bar{L}(0+)\le\frac{1}{2}\tv\left(\log(p_o)\right) + \frac{1}{2\inf a_o}\tv(u_o)\,,  %<  m \hspace{0.7pt} d(m)
\end{equation*}
we look for $m$ satisfying
\begin{align}
    |\delta_2|<\frac{1}{{c(m)}}-1 \,=\, \frac{2}{\cosh m -1} &\, \dot = \, w(m)\,,\label{hyp1}
    \\
    \tv\left(\log(p_o)\right) \,+\, \frac{1}{\min\{a_r,a_\ell\}}%{\inf a_o}
    \tv(u_o)  < 2m\hspace{.8pt} c^2(m) &\, \dot =\, z(m)\,. \label{hyp2-1}
\end{align}
Notice that $w(m)$ is strictly decreasing from $\reali_+$ to $\reali_+$, while $z(m)$ is strictly increasing on the same sets.
Since $|\delta_2|<2$, we restrict the choice of the parameter to have $w(m)\in (0,2)$, that is $\cosh m>2$ and then
\begin{equation*}
m>\bar m = \cosh^{-1}(2) =  \log\left(2+\sqrt 3\right) \,.%  \qquad w(\bar m) =2\qquad (\mbox{equivalently }\ \cosh \bar m=2).% c(\bar m) = 1/3).
\end{equation*}
We can now define
\begin{equation}\label{K}
K(r)\,\dot =\, z\left(w^{-1}(r) \right)\,,\qquad r\in (0,2)\,,
\end{equation}
which can be written explicitly as
\begin{equation}
K(r)=\frac{2}{(1+r)^2}\,c^{-1}\left(\frac{1}{1+r}\right)
=\frac{2}{(1+r)^2} \log\left(\frac{2}{r}+1+\frac{2}{r}\sqrt{1+r}\right)\,.
\label{eq:K-explicit}
\end{equation}
It is easy to check that $K$ satisfies properties \eqref{rage}.

Hence, if the assumption \eqref{hyp2} holds, namely
\begin{equation*}
  \tv\left(\log(p_o)\right) \,+\, \frac{1}{\min\{a_r,a_\ell\}}  \tv(u_o) < K(|\delta_2|) \,,
\end{equation*}
it is easy to prove that one can choose $m>\bar m$ such that \eqref{hyp1}, \eqref{hyp2-1} hold. Finally, in order to pass to the limit and prove the convergence to a weak solution, one can proceed as in \cite{Bressanbook}. Theorem \ref{thm:main} is, therefore, completely proved.
\end{proofof}

Now, we make a comparison between Theorem~\ref{thm:main} and the main result in \cite{amadori-corli-siam}, which was proved to be equivalent to Theorem $3.1$ of \cite{amadori-corli-source}. Condition $(3.7)$ of the latter theorem, when applied to the current problem, can be written as
\begin{equation}\label{acs37}
\tv\left(\log(p_o)\right) + \frac{1}{\min\{a_r,a_\ell\}}%{\inf a_o}
\tv(u_o) < H(|\delta_2|)\,,
\end{equation}
where the function $H(r)$ is only defined for $r<1/2$ by
\begin{equation}\label{H}
H(r)\doteq 2(1-2r)k^{-1}(r)\,, \qquad k(m) = \frac{1-\sqrt{d(m)}}{2-\sqrt{d(m)}}\,.
\end{equation}
Here above, $d(m)$ is the damping coefficient introduced in \cite[Lemma 5.6]{amadori-corli-siam}, see Remark~\ref{rem-d(m)}.

%%%%%%%%%%%%%%%%%%%%%%%%%%%%%%%%%%%%

\begin{figure}[htbp]
\begin{center}
 %\hskip-5mm
 {\includegraphics[width=7cm]{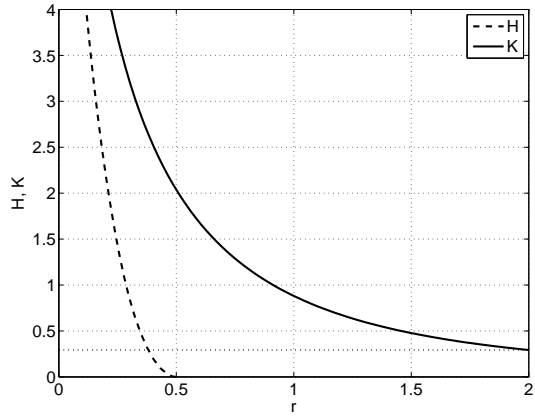}}
 %{\includegraphics[width=5cm]{}}
    %\begin{psfrags}
%    \psfrag{H}{$H$} \psfrag{A}{$A$} \psfrag{A_o}{$A_o$}
%    \includegraphics[width=5.0cm]{H}
%    \end{psfrags}
\end{center}
%\begin{picture}(100,0)(0,0)
%\setlength{\unitlength}{0.8pt}
%\put(0,0){\makebox{$(a)$}}\put(180,0){\makebox{$(b)$}}
%\end{picture}
\vspace{-6mm}
\caption{The functions $H$ (dashed line) and $K$ (solid line). The horizontal dotted line gives the asymptotic value $\frac29\log(2+\sqrt3)$ of $K$ for $r\to 2-$. }
\label{fig:HK}
\end{figure}

%%%%%%%%%%%%%%%%%%%%%%%%%%%%%%%%%%%%

Hence, the result of Theorem \ref{thm:main} is new for $1/2\le|\delta_2|<2$, including the case where the $2$-wave may be arbitrarily large, i.e. $|\delta_2|$ close to $2$. In order to compare \eqref{acs37} with \eqref{hyp2} in the common range $|\delta_2|<1/2$,
we set  $r=|\delta_2|\in(0,1/2)$ and rewrite $H$ as
\begin{align}
H(r)&=2(1-2r)\,d^{-1}\left(\bigl(\frac{1-2r}{1-r}\bigr)^2\right)\,.
\nonumber
\end{align}
Comparing this expression with \eqref{eq:K-explicit}, we notice that $1/(1+r)^2>(1-2r)$. Moreover, we have
\begin{equation*}
\frac{1}{1+r}> \bigl(\frac{1-2r}{1-r}\bigr)^2\,;
\end{equation*}
since $c<d$ and $c$ is strictly increasing, we have also that $c^{-1}\left(1/(1+r)\right)>k^{-1}(r)$. We deduce that $K(r)>H(r)$ for $0\le r <1/2$; see Figure~\ref{fig:HK}. Then, the conditions on the initial data obtained here considerably improve the ones required in the previous works \cite{amadori-corli-siam,amadori-corli-source}, albeit the latter were given for a more general case.

%%%%%%%%%%%%%%%%%%%%%%%%%%%%%%%%%%

\appendix
\section{Another interpretation of the damping coefficient $c$}
\setcounter{equation}{0}

The function $c$ introduced in \eqref{eq:chi_def} plays a fundamental role in controlling the size of the weight $\xi$ assigned to shock waves in the front-tracking scheme, see Proposition \ref{prop:DeltaF33}. In this appendix we show that the same coefficient $c$ also appears in the stability analysis of the Riemann problems of system \eqref{eq:system}, see \cite{Schochet-Glimm,AmCo12-Chambery}.

In \cite{Schochet-Glimm} Schochet proves that if the solution of a Riemann problem satisfies some {\em finiteness conditions} (also called $BV$-stability conditions), then {\em small} perturbations of bounded variation of its initial data give rise to a solution defined globally in time. The analysis for system \eqref{eq:system} was done in \cite{AmCo12-Chambery}, where it was proved that there are solutions to suitable Riemann problems that do not satisfy such conditions.

As in \cite[Lemma 1.2]{AmCo12-Chambery}, let us consider the pattern formed by a $1$-shock $\eps_1$, a $2$-wave $\eps_2$ and a $3$-shock $\eps_3$.
Maintaining the notation of that paper, we denote the states lying between waves with $U_0,U_1,U_2,U_3$, from left to right; see Figure \ref{fig:RP}.

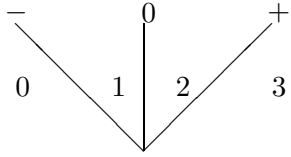
\begin{figure}[htbp]
\begin{picture}(100,50)(0,10)
\setlength{\unitlength}{0.8pt}
%axes:
\put(280,0){
\put(0,0){\line(-1,1){60}}
\put(0,0){\line(0,1){60}}
\put(0,0){\line(1,1){60}}

\put(-65,65){\makebox(0,0)[l]{$-$}}
\put(-1,65){\makebox(0,0)[l]{$0$}}
\put(58,65){\makebox(0,0)[l]{$+$}}

\put(-60,30){\makebox(0,0)[l]{$0$}}
\put(-15,30){\makebox(0,0)[l]{$1$}}
\put(15,30){\makebox(0,0)[l]{$2$}}
\put(60,30){\makebox(0,0)[l]{$3$}}
}
\end{picture}
\caption{States for the Riemann problem.}
\label{fig:RP}
\end{figure}

% We indicate with $c_1=a_1/v_1$, $c_2=a_2/v_2$ the characteristic speeds and with $s_-=-a_1/\sqrt{v_1v_0}$, $s_+=a_2/\sqrt{v_2v_3}$ the speeds of the shocks of the first and third family, respectively. Finally, we write $L_{\pm}$, $R_{\pm}$ for the left and
% right eigenvectors of the first and third family, while we use $[U]_{\pm}$ to indicate the variation of $U$ along the $1$- and $3$-shock. Then, let us introduce the following quantities
We use $c_1=a_1/v_1$, $c_2=a_2/v_2$ to indicate the characteristic speeds and 
$s_-=-a_1/\sqrt{v_1v_0}$, $s_+=a_2/\sqrt{v_2v_3}$ to indicate the speeds of the 
shocks of the first and third family, respectively. Finally, we write $L_{\pm}$, 
$R_{\pm}$ for the left and right eigenvectors of the first and third family, 
while we let $[U]_{\pm}$ be the variation of $U$ along the $1$- and $3$-shock. 
Then, let us introduce the following quantities
\[
A=|R^{(-)}|=\left|\frac{c_1+s_-}{c_1-s_-}\cdot\frac{L_+(U_1)\cdot[U]_-}{L_-(U_1)\cdot[U]_-}\right|\,,\quad B=|R^{(+)}|=\left|\frac{c_2-s_+}{c_2+s_+}\cdot\frac{L_-(U_2)\cdot[U]_+}{L_+(U_2)\cdot [U]_+}\right|\,,
\]
which represent some coefficients of the reflection matrices $R^{(-)}_{>,\le}$ 
and $R^{(+)}_{<,\ge}$ appearing in \cite{AmCo12-Chambery}.

\begin{lemma}\label{lem:Schochet}
Under the notation in \eqref{eq:chi_def}, we have $A=c(\eps_1)$ and $B=c(\eps_3)$.
\end{lemma}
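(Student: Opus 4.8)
The plan is to establish both identities by direct computation, using the explicit eigenstructure of \eqref{eq:system} together with the Rankine--Hugoniot relations across the two shocks. First I would record the left eigenvectors of the genuinely nonlinear families. Since $e_1=-a/v$ and $e_3=a/v$, writing $c=a/v$ and $p_\lambda=\partial_\lambda p$ one finds, with the $u$-component normalized to $1$,
\[
L_-(U)=\left(c,\,1,\,-\tfrac{p_\lambda}{c}\right),\qquad L_+(U)=\left(-c,\,1,\,\tfrac{p_\lambda}{c}\right),
\]
so that $L_+$ is obtained from $L_-$ by reversing the signs of the first and third entries --- a direct reflection of the symmetry $e_1=-e_3$. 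The decisive simplification is that the jumps $[U]_-=U_1-U_0$ and $[U]_+=U_3-U_2$ have vanishing $\lambda$-component, since each shock connects two states sharing the same value of $\lambda$; hence the third entries of $L_\pm$ never contribute and only the pair $(\pm c_j,1)$ acting on the $(v,u)$ variables is relevant.

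Next I would insert the Rankine--Hugoniot jump relation. For the $1$-shock joining $U_0$ to $U_1$ the first conservation law gives $u_1-u_0=-s_-(v_1-v_0)$, whence
\[
L_+(U_1)\cdot[U]_-=-(c_1+s_-)(v_1-v_0),\qquad L_-(U_1)\cdot[U]_-=(c_1-s_-)(v_1-v_0).
\]
The common factor $(v_1-v_0)$ cancels in the ratio, so that $A=\bigl((c_1+s_-)/(c_1-s_-)\bigr)^2$. It then remains to evaluate this prefactor: substituting $c_1=a_1/v_1$, $s_-=-a_1/\sqrt{v_0v_1}$ and setting $q=\sqrt{v_1/v_0}=e^{\eps_1}$ by \eqref{eq:strengths}, I obtain
\[
\frac{c_1+s_-}{c_1-s_-}=\frac{1-q}{1+q}=-\tanh\!\left(\frac{\eps_1}{2}\right),
\]
so that $A=\tanh^2(\eps_1/2)$. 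Finally the half-angle identities $\cosh z-1=2\sinh^2(z/2)$ and $\cosh z+1=2\cosh^2(z/2)$ give $\tanh^2(z/2)=(\cosh z-1)/(\cosh z+1)=c(z)$, and therefore $A=c(\eps_1)$.

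The identity $B=c(\eps_3)$ follows by the symmetric computation on the $3$-shock: using $u_3-u_2=-s_+(v_3-v_2)$ the eigenvector ratio in $B$ reduces to $-(c_2-s_+)/(c_2+s_+)$, whence $B=\bigl((c_2-s_+)/(c_2+s_+)\bigr)^2$, and inserting $c_2=a_2/v_2$, $s_+=a_2/\sqrt{v_2v_3}$ with $\sqrt{v_2/v_3}=e^{\eps_3}$ yields $B=\tanh^2(\eps_3/2)=c(\eps_3)$. The argument is in essence a bookkeeping exercise; the only delicate points are keeping track of the signs in the two inner products and making sure that the relative normalization of $L_+$ and $L_-$ is the symmetric one used in \cite{AmCo12-Chambery} --- it is exactly this choice (a common $u$-component) that lets the reflection coefficients collapse precisely onto $c(\eps_1)$ and $c(\eps_3)$ with no extraneous multiplicative constant.
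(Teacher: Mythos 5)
Your proof is correct and follows essentially the same route as the paper's: reduce the eigenvector ratio via the Rankine--Hugoniot relation $u_1-u_0=-s_-(v_1-v_0)$ to obtain $A=\bigl((c_1+s_-)/(c_1-s_-)\bigr)^2$, substitute the explicit expressions for $c_1$, $s_-$ and the strength $\eps_1$, and identify $\tanh^2(\eps_1/2)$ with $c(\eps_1)$ (and symmetrically for $B$). The only differences are presentational — you make the eigenvectors and the vanishing $\lambda$-jump explicit and write out the $B$ computation, which the paper leaves as ``similar''.
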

\begin{proof} First, notice that
\[
\frac{L_+(U_1)\cdot[U]_-}{L_-(U_1)\cdot[U]_-}=\frac{-c_1(v_1-v_0)+(u_1-u_0)}{c_1(v_1-v_0)+(u_1-u_0)}
\]
and, recalling that along a shock of the first family it holds $u_1-u_0=-s_-(v_1-v_0)$, the previous quantity becomes $(-c_1-s_-)/(c_1-s_-)$. Therefore,
\[
A=\bigl(\frac{c_1+s_-}{c_1-s_-}\bigr)^2
% =\left(\frac{1/{v_1}-1/\sqrt{v_1v_0}}{1/{v_1}+1/{\sqrt{v_1v_0}}}\right)^2
=\left(\frac{{v_0}/{v_1}-\sqrt{{v_0}/{v_1}}}{{v_0}/{v_1}+\sqrt{{v_0}/{v_1}}}\right)^2.
\]
By definition \eqref{eq:strengths}, we get ${v_0}/{v_1}=\exp(-2\eps_1)$ and, finally, we find
\begin{align*}
A&=%\bigl(\frac{\exp(-2\eps_1)-\exp(-\eps_1)}{\exp(-2\eps_1)+\exp(-\eps_1)}\bigr)^2=
\bigl(\frac{\exp(-\eps_1/2)-\exp(\eps_1/2)}{\exp(-\eps_1/2)+\exp(\eps_1/2)}\bigr)^2=%\bigl(\frac{\sinh(\eps_1/2)}{\cosh(\eps_1/2)}\bigr)^2=
\tanh^2(\eps_1/2)=\frac{\cosh(\eps_1)-1}{\cosh(\eps_1)+1}=c(\eps_1)\,.
\end{align*}
By similar computations we get also
$B=(\cosh(\eps_3)-1)/(\cosh(\eps_3)+1)=c(\eps_3)$.
\end{proof}

By Lemma \ref{lem:Schochet}, the finiteness condition of \cite{Schochet-Glimm} for the above pattern of two shock waves and the contact discontinuity can be written as
\begin{equation}\label{eq:Schochet-stable}
c(\eps_1)c(\eps_3)\eps_2^2 - \left(c(\eps_1)+c(\eps_3)\right)|\eps_2| + 2\left(1-c(\eps_1)c(\eps_3)\right)>0\,.
\end{equation}
This condition makes explicit the analogous one provided in \cite[(14)]{AmCo12-Chambery}. We remark that condition \eqref{eq:Schochet-stable} is satisfied for {\em every} shock $\eps_3$ (for example) if it holds in the degenerate case $c(\eps_3)=1$, \cite{AmCo12-Chambery}; in such a case, it simply reduces to
\[
1+|\eps_2| \le \frac{1}{c(\eps_1)},
\]
which reminds of \eqref{cond_xi_delta2}.

{\small
%\bibliography{refe}

\begin{thebibliography}{}

\end{thebibliography}


\begin{thebibliography}{10}

\bibitem{ABCD2} D.~Amadori, P.~Baiti, A.~Corli, and E.~Dal~Santo.
\newblock Global existence of solutions for a triphasic flow.
\newblock {In preparation}, 2014. %to appear

\bibitem{AmCo06-Proceed-Lyon} D.~Amadori and A.~Corli.
\newblock A hyperbolic model of multi-phase flow.
\newblock In S.~Benzoni-Gavage and D.~Serre, editors, {\em Hyperbolic Problems:
  {T}heory, {N}umerics, {A}pplications. Proceedings of the $11^{th}$ {I}nt.
  {C}onf. on {H}yperbolic {P}roblems}, pages 407--414. Springer, 2008.

\bibitem{amadori-corli-siam}
D.~Amadori and A.~Corli.
\newblock On a model of multiphase flow.
\newblock {\em SIAM J. Math. Anal.}, 40(1):134--166, 2008.

\bibitem{AmCo08-Proceed-Maryland} D.~Amadori and A.~Corli.
\newblock Global solutions for a hyperbolic model of multiphase flow.
\newblock In {\em Hyperbolic problems: theory, numerics and applications},
  volume~67 of {\em Proc. Sympos. Appl. Math.}, pages 161--173. Amer. Math. Soc., Providence, RI, 2009.

\bibitem{amadori-corli-source} D.~Amadori and A.~Corli.
\newblock Global existence of {BV} solutions and relaxation limit for a model of multiphase reactive flow.
\newblock {\em Nonlinear Anal.}, 72(5):2527--2541, 2010.

\bibitem{AmCo12-Chambery} D.~Amadori and A.~Corli.
\newblock Solutions for a hyperbolic model of multiphase flow.
\newblock In {\em ESAIM: Proceedings},  40:1-15, 2013.
 %of the Conference \lq\lq Multiphase flow in% industrial and environmental engineering\rq\rq}.

\bibitem{amadori-corli-glimm} D.~Amadori and A.~Corli.
\newblock Glimm estimates for a model of multiphase flow.
\newblock {\em Preprint}, 2014.

\bibitem{AmadoriGuerra01} D.~Amadori and G.~Guerra.
\newblock Global {BV} solutions and relaxation limit for a system of conservation laws.
\newblock {\em Proc. Roy. Soc. Edinburgh Sect. A}, 131(1):1--26, 2001.

\bibitem{Asakura-Corli} F.~Asakura and A.~Corli.
\newblock Global existence of solutions by path decomposition for a model of multiphase flow.
\newblock {\em Quart. Appl. Math.}, 71(1):135--182, 2013.

\bibitem{BaitiDalSanto} P.~Baiti and E.~Dal~Santo.
\newblock Front tracking for a {$2\times 2$} system of conservation laws.
\newblock {\em Electron. J. Differential Equations}, pages No. 220, 14, 2012.

\bibitem{Bressanbook} A.~Bressan.
\newblock {\em Hyperbolic systems of conservation laws. The one-dimensional Cauchy problem}.
\newblock Oxford University Press, 2000.

\bibitem{BressanColombo-Unique} A.~Bressan and R.~M. Colombo.
\newblock Unique solutions of {$2\times 2$} conservation laws with large data.
\newblock {\em Indiana Univ. Math. J.}, 44(3):677--725, 1995.

\bibitem{Chern} I.-L. Chern.
\newblock Stability theorem and truncation error analysis for the Glimm scheme and for a front tracking method for flows with strong discontinuities.
\newblock  {\em Comm. Pure Appl. Math.}, 42(6):815--844, 1989.

\bibitem{CorliSableTougeronS} A.~Corli and M.~Sabl{\'e}-Tougeron.
\newblock Perturbations of bounded variation of a strong shock wave.
\newblock {\em J. Differential Equations}, 138(2):195--228, 1997.

\bibitem{CorliSableTougeronC} A.~Corli and M.~Sabl{\'e}-Tougeron.
\newblock Stability of contact discontinuities under perturbations of bounded variation.
\newblock {\em Rend. Sem. Mat. Univ. Padova}, 97:35--60, 1997.

\bibitem{Dafermos} C.~M. Dafermos.
\newblock {\em Hyperbolic conservation laws in continuum physics}.
\newblock Springer-Verlag, Berlin, third edition, 2010.

\bibitem{Fan} H.~Fan.
\newblock On a model of the dynamics of liquid/vapor phase transitions.
\newblock {\em SIAM J. Appl. Math.}, 60(4):1270--1301, 2000.

\bibitem{GST2007} J. Groah, J. Smoller, B.Temple.
\newblock Shock wave interactions in general relativity.
\newblock  Springer Monographs in Mathematics. Springer, New York, 2007

\bibitem{HoldenRisebroSande} H.~Holden, N.~H. Risebro, and H.~Sande.
\newblock The solution of the {C}auchy problem with large data for a model of a  mixture of gases.
\newblock {\em J. Hyperbolic Differ. Equ.}, 6(1):25--106, 2009.

\bibitem{HoldenRisebroSande2} H.~Holden, N.~H. Risebro, and H.~Sande.
\newblock Front tracking for a model of immiscible gas flow with large data.
\newblock {\em BIT}, 50(2):331--376, 2010.

\bibitem{LewickaBV} M.~Lewicka.
\newblock Well-posedness for hyperbolic systems of conservation laws with large {BV} data.
\newblock {\em Arch. Ration. Mech. Anal.}, 173(3):415--445, 2004.

\bibitem{LiuIB} T.-P. Liu.
\newblock Initial-boundary value problems for gas dynamics.
\newblock {\em Arch. Rational Mech. Anal.}, 64:137--168, 1977.

\bibitem{Liu} T.-P. Liu.
\newblock Solutions in the large for the equations of nonisentropic gas dynamics.
\newblock {\em Indiana Univ. Math. J.}, 26:147--177, 1977.

\bibitem{Nishida68} T.~Nishida.
\newblock Global solution for an initial boundary value problem of a quasilinear hyperbolic system.
\newblock {\em Proc. Japan Acad.}, 44:642--646, 1968.

\bibitem{NishidaSmoller} T.~Nishida and J.~A. Smoller.
\newblock Solutions in the large for some nonlinear hyperbolic conservation laws.
\newblock {\em Comm. Pure Appl. Math.}, 26:183--200, 1973.

\bibitem{Peng} Y.-J. Peng.
\newblock Solutions faibles globales pour un mod\`ele d'\'ecoulement diphasiques.
\newblock {\em Ann. Scuola Norm. Sup. Pisa Cl. Sci. (4)}, 21(4):523--540, 1994.

\bibitem{Schochet-Glimm} S.~Schochet.
\newblock Sufficient conditions for local existence via {G}limm's scheme for  large {BV} data.
\newblock {\em J. Differential Equations}, 89(2):317--354, 1991.

\end{thebibliography}
\bibliographystyle{abbrv}

}
\end{document}